\DeclareFontFamily{OT2}{cmr}{\hyphenchar\font45 }
\DeclareFontShape{OT2}{cmr}{m}{n}{<->wncyr10}{}
\DeclareFontShape{OT2}{cmr}{m}{it}{<->wncyi10}{}
\DeclareFontShape{OT2}{cmr}{m}{sc}{<->wncysc10}{}
\DeclareFontShape{OT2}{cmr}{b}{n}{<->wncyb10}{}
\DeclareFontShape{OT2}{cmr}{bx}{n}{<->ssub*wncyr/b/n}{}
\DeclareFontFamily{OT2}{cmss}{\hyphenchar\font45 }
\DeclareFontShape{OT2}{cmss}{m}{n}{<->wncyss10}{}
\DeclareRobustCommand\cyr{\fontencoding{OT2}\selectfont}
\DeclareTextFontCommand{\textcyr}{\cyr}
\newtheorem{theorem}{Theorem}[section]
\newtheorem{prop}[theorem]{Proposition}
\newtheorem{corr}[theorem]{Corollary}
\theoremstyle{definition}
\newtheorem{deff}[theorem]{Definition}
\theoremstyle{remark}
\newtheorem{comm}[theorem]{Remark}
\newcommand{\R}{\mathbb R}
\newcommand{\C}{\mathbb C}
\newcommand{\Cplus}{{\mathbb C}_+}
\newcommand{\Cminus}{{\mathbb C}_-}
\DeclareMathOperator{\supp}{supp}
\newcommand{\p}{\partial}
\newcommand{\z}{\bar z}
\DeclareMathOperator{\im}{Im}
\DeclareMathOperator{\re}{Re}
\newcommand{\Har}{H^p(D)}
\newcommand{\dc}{\mathcal{D}'(\p D)}
\title{The Schwarz boundary value problem for boundary values in the sense of distributions}
\author{William L. Blair}
\address{Department of Mathematical Sciences\\
  University of Arkansas\\
  Fayetteville, Arkansas}
\email{wlblair@uark.edu}
\keywords{Schwarz boundary value problem, boundary value in the sense of distributions, nonhomogeneous Cauchy-Riemann equation}
\subjclass[2010]{30E25, 35F30, 46F20, 35F15, 30J99, 46F99}
\begin{document}

\begin{abstract}
    We construct solutions to the Schwarz boundary value problem on the unit disk and the upper half-plane when the boundary condition is with respect to boundary values in the sense of distributions. 
\end{abstract}

\maketitle

\section{Introduction}
In this paper, we extend the classes of boundary conditions under which the Schwarz boundary value problem is solvable. The Schwarz boundary value problem is a classically studied boundary value problem in the setting of complex-valued partial differential equations. The problem is to find a holomorphic function on a domain in the plane that has real part which agrees with a prescribed function on the boundary of the domain. 

When considered on the unit disk or upper half-plane, this problem is solvable by considering the Dirichlet problem with the same boundary condition, i.e., finding a real-valued harmonic function on the domain which agrees with the prescribed boundary condition. For the unit disk and the upper half-plane, the Dirichlet problem is solved by the Poisson integral of the boundary condition, for many nice classes of boundary condition. Once this harmonic function is found, since we are working on a domain, it follows that the harmonic function has a harmonic conjugate. On the disk and upper half-plane, the harmonic conjugate is known and well-defined up to an imaginary constant. However, this constant makes the boundary value problem not well-defined, as there is an infinite number of solutions parameterized by the imaginary constant. This issue is resolved by specifying the constant. Since the harmonic conjugate of the Poisson integral is equal to this constant when evaluated at $z = 0$ on the disk and $z = i$ on the upper half-plane, we can specify the constant by choosing the value of the harmonic conjugate at this point evaluation. Once the Schwarz boundary value problem is amended to include this pointwise condition, the problem is well-defined. 

In \cite{BegBook}, H. Begehr explicitly solves a generalized Schwarz boundary value problem in the unit disk where, instead of requiring holomorphicity in the interior of a solution, a solution satisfies a nonhomogeneous Cauchy-Riemann equation in the interior. In \cite{Beg}, H. Begehr solves a generalized Schwarz boundary value problem on the unit disk where solutions satisfy $n$th-order nonhomogeneous Cauchy-Riemann equations, $n \geq 1$, (solutions to these higher-order Cauchy-Riemann equations are called polyanalytic in the special case of the equation being homogeneous, see \cite{Balk}), where a solution of the boundary value problem is also required to have derivatives with real parts that satisfy boundary conditions and imaginary parts that satisfy pointwise evaluation conditions for all orders up to $n-1$. In \cite{Gaertner}, E. Gaertner proves the analogue to Begehr's first-order result on the upper half-plane, and in \cite{bvpuhp}, A. Chaudhary and A. Kumar generalize E. Gaertner's result to the higher-order case in the upper half-plane.

In all of these previously known results concerning the generalized Schwarz boundary value problem, the boundary conditions are with respect to continuous functions, with additional Lebesgue integrability constraints in the case of the upper half-plane which are superfluous in the unit disk case. We extend the solvable classes of the Schwarz boundary value problem to classes with boundary conditions which are in terms of a boundary value in the sense of distributions of holomorphic functions with tempered growth at the boundary. An intermediary step in this progress to weaker boundary conditions is realized in \cite{WB} for the unit disk, where the generalized Schwarz boundary value problem is solved when the boundary condition is in terms of a boundary value in the sense of distributions of a function in certain holomorphic Hardy spaces. The results of this paper extend the corresponding results in \cite{WB}. In particular, there exist holomorphic functions on the unit disk which have the property of tempered growth at the boundary but are not in any of the holomorphic Hardy spaces (see \cite{GHJH2} and \cite{Duren}), so no appeal to Hardy space results is needed in the first-order case. Also, the upper half-plane case with boundary condition being a boundary value in the sense of distributions was not considered in \cite{WB} and is new to this paper. 

We make the extension in the disk case by appealing to Theorem 3.1 from \cite{GHJH2} which allows us to both identify the classes of holomorphic functions defined on the disk with boundary values in the sense of distributions and provides a constructive method for representing them given their boundary value in the sense of distributions. A similar technique is employed in the upper half-plane case. 

We describe the layout of this paper. In the next section, we consider the case of the unit disk. Preliminary definitions and results are presented concerning associated function spaces and certain integral operators that will be used, initially for general objects and followed by those associated with boundary values in the sense of distributions. Next, we define the generalized Schwarz boundary value problem that we consider and explicitly show its solution. Next, we describe the higher-order version of the generalized Schwarz boundary value problem, and we make an extension of the corresponding result in \cite{WB} with respect to the zero-order boundary condition. We conclude our study of the Schwarz boundary value problem on the unit disk by considering a special case with respect to the nonhomogeneous part of the corresponding differential equation. The significance of the special case is that the nonhomogeneous part of the differential equation is not assumed to be integrable. This is unique in the literature. We extend the first-order special case result to one for the higher-order Schwarz problem where every boundary condition is in terms of boundary values in the sense of distributions, with no appeal to the theory of the holomorphic Hardy spaces. The last section considers the case of the upper half-plane using the same structure as the previous section. After preliminary definitions and results specific to this domain, we define and solve the generalized Schwarz boundary value problem on the upper half-plane with boundary condition a boundary value in the sense of distributions. Finally, we utilize the first-order generalized Schwarz boundary value problem result to extend the corresponding higher-order result in \cite{bvpuhp} with respect to the zero-order boundary condition.

We thank Professor Andrew Raich and Professor Gustavo Hoepfner for their support during the time this work was produced. 

\section{The case of the unit disk}

\subsection{Definitions and Results}

Throughout we will let $D(0,r)$ denote the disk centered at the origin with radius $r$ in the complex plane, $D := D(0,1)$  is the unit disk in the complex plane, $\p D(0,r)$  and $\p D$ denote the boundary of $D(0,r)$ and $D$ respectively, $H(D)$ denote the space of holomorphic functions defined on $D$, $H^p(D)$ the classical holomorphic Hardy space of functions on $D$, $C^\infty(\p D)$ is the space of smooth functions defined on $\p D$, and $\dc$ is the space of distributions on $\p D$.

First we introduce some integral operators and results associated with them.

\begin{deff}\label{vekopondisk}
For $f: \mathbb{C} \to \mathbb{C}$ and $z  \in \mathbb{C}$, we denote by $T_D(\cdot)$ the integral operator defined by 
\[
T_D(f)(z) = -\frac{1}{\pi} \iint_D \frac{f(\zeta)}{\zeta - z}\, d\xi\,d\eta,
\]
whenever the integral is defined.
\end{deff}

\begin{theorem}[Theorem 1.26 \cite{Vek} p.\,47] \label{Vekonepointtwentysix}
If $f \in L^q(D)$, $1 \leq q \leq 2$, then $T_D(f) \in L^\gamma(D)$, $1 < \gamma < \frac{2q}{2-q}$. 
\end{theorem}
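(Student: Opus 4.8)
The plan is to recognize $T_D$ as, up to sign and support, convolution against the Cauchy kernel and then invoke Young's inequality for convolutions. I would extend $f$ by zero outside $D$, so that $f \in L^q(\mathbb{C})$ with support in $\overline{D}$. For $z \in D$ one then has
\[
T_D(f)(z) = -\frac{1}{\pi}\iint_{\mathbb{C}} \frac{f(\zeta)}{\zeta - z}\, d\xi\, d\eta = (f * K)(z), \qquad K(w) := \frac{1}{\pi w}.
\]
Since $f$ is supported in $\overline{D}$ and the point of evaluation satisfies $z \in D$, only values $K(w)$ with $|w| < 2$ enter the integral, so I may replace $K$ by $K\,\mathbf{1}_{D(0,2)}$ without changing $T_D(f)$ on $D$.

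The key analytic input is that $|K(w)| = \frac{1}{\pi|w|}$, and $|w|^{-s}$ is integrable over the bounded set $D(0,2)$ exactly when $s < 2$; hence $K\,\mathbf{1}_{D(0,2)} \in L^s(\mathbb{C})$ for every $1 \leq s < 2$. Young's inequality then gives, whenever $\frac{1}{q} + \frac{1}{s} = 1 + \frac{1}{\gamma}$ with all exponents in $[1,\infty]$,
\[
\|T_D(f)\|_{L^\gamma(D)} \leq \big\|f * K\,\mathbf{1}_{D(0,2)}\big\|_{L^\gamma(\mathbb{C})} \leq \|f\|_{L^q(D)}\,\big\|K\,\mathbf{1}_{D(0,2)}\big\|_{L^s}.
\]

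Next I would solve the exponent relation for $s$: given $\gamma$, set $\frac{1}{s} = 1 + \frac{1}{\gamma} - \frac{1}{q}$. A direct computation shows $\frac{1}{s} > \frac{1}{2}$ (equivalently $s < 2$) precisely when $\gamma < \frac{2q}{2-q}$, which is the stated upper bound, while $\frac{1}{s} \leq 1$ (equivalently $s \geq 1$) holds precisely when $\gamma \geq q$. Thus for $q \leq \gamma < \frac{2q}{2-q}$ the admissible $s$ lies in $[1,2)$, we have $K\,\mathbf{1}_{D(0,2)} \in L^s$, and the displayed inequality yields $T_D(f) \in L^\gamma(D)$. In particular, taking $\gamma = q$ (so $s = 1$) gives $T_D(f) \in L^q(D)$; for the remaining range $1 < \gamma < q$ the boundedness of $D$ furnishes the inclusion $L^q(D) \hookrightarrow L^\gamma(D)$, so $T_D(f) \in L^\gamma(D)$ there as well. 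Together these cover all $1 < \gamma < \frac{2q}{2-q}$.

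The one point requiring care, and the reason the upper bound is a strict open endpoint, is that the kernel is only locally $L^s$ for $s$ strictly below $2$: the singularity $\frac{1}{|w|}$ is borderline non-integrable to the square power, so $s = 2$ (hence $\gamma = \frac{2q}{2-q}$) is genuinely excluded. The mild bookkeeping obstacle is the split into the two ranges $\gamma \geq q$ and $\gamma < q$, since Young's inequality with $s \geq 1$ reaches $\gamma \geq q$ directly and one must fall back on the finite measure of $D$ to capture the lower range. Everything else is the routine verification that the convolution is well defined for $f \in L^q$ and that the exponent arithmetic matches the claimed interval.
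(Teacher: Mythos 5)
Your proof is correct. One important point of comparison: the paper itself gives no proof of this statement --- it is imported verbatim as Theorem 1.26 of Vekua's book (cited as \cite{Vek}, p.~47), so the only argument to measure yours against is Vekua's original one, which proceeds by direct estimation of the weakly singular integral using H\"older-type inequalities and kernel splittings; that route also produces the quantitative bounds and behaviour at infinity that Vekua needs elsewhere in his theory. Your route --- extend $f$ by zero, observe that on $D$ the operator $T_D$ is convolution with the Cauchy kernel, truncate the kernel to $K\,\mathbf{1}_{D(0,2)}$ (legitimate since $|z-\zeta|<2$ for $z,\zeta \in D$), note $K\,\mathbf{1}_{D(0,2)} \in L^s$ exactly for $s<2$, and apply Young's inequality --- is shorter, self-contained, and the exponent bookkeeping is right: Young covers $q \le \gamma < \frac{2q}{2-q}$ with $s \in [1,2)$, and the finite measure of $D$ handles $1 < \gamma < q$ via $L^q(D) \hookrightarrow L^\gamma(D)$ once the case $\gamma = q$, $s=1$ is in hand. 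Your diagnosis of the open endpoint is also correctly calibrated: $\gamma = \frac{2q}{2-q}$ forces $s=2$, where the truncated kernel just fails to be square integrable. The only cosmetic gap is the case $q=2$, where the upper bound $\frac{2q}{2-q}$ must be read as $+\infty$; your argument does cover it, since then $\frac{1}{s} = \frac{1}{2} + \frac{1}{\gamma} > \frac{1}{2}$ for every finite $\gamma$, but it is worth saying so explicitly.
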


\begin{theorem}[Theorem 1.4.7 \cite{KlimBook} p.\,17]\label{onefourseven}
If $f \in L^q(D)$, $1 < q \leq 2$, then $T_D(f)|_{\p D(0,r)} \in L^\gamma(\p D(0,r))$, $0 < r \leq 1$, where $\gamma$ satisfies $1 < \gamma < \frac{q}{2-q}$, and 
\[
||T_Df||_{L^\gamma(\p D(0,r))} \leq C ||f||_{L^q(D)},
\]
where $C$ is a constant that does not depend on $r$ or $f$. 
\end{theorem}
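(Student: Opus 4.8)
The plan is to recognize the restriction estimate as a Sobolev trace inequality. First I would establish that $T_D f$ lies in the Sobolev space $W^{1,q}(D)$ with norm controlled by $\|f\|_{L^q(D)}$. Indeed, $T_D$ is the solution operator for the $\dbar$-equation, so $\dbar T_D f = f$, while $\p T_D f = \Pi f$, the Beurling transform of $f$ (extended by zero outside $D$). Since $\Pi$ is a Calderón--Zygmund operator it is bounded on $L^q(\C)$ for $1 < q < \infty$, and here $q > 1$, so $\|\p T_D f\|_{L^q(D)} \le \|\Pi f\|_{L^q(\C)} \le C_q\|f\|_{L^q(D)}$. Together with $T_D f \in L^q(D)$, which follows from Theorem \ref{Vekonepointtwentysix} upon taking the admissible exponent $\gamma = q$ (note $q < \tfrac{2q}{2-q}$), this yields $\|T_D f\|_{W^{1,q}(D)} \le C\|f\|_{L^q(D)}$.

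Next I would invoke the Sobolev trace theorem in dimension two: for $1 < q \le 2$ the trace maps $W^{1,q}(D)$ boundedly into $L^\gamma(\p D)$ for every $\gamma \le \tfrac{q}{2-q}$ (with $\tfrac{q}{2-q}=\infty$ when $q=2$), which is exactly the critical exponent in the statement. Applied on $D$ this gives the case $r = 1$; the open range $1 < \gamma < \tfrac{q}{2-q}$ together with the finiteness of arc-length measure on the circles supplies the remaining exponents, since $L^{\gamma_2}(\p D(0,r)) \hookrightarrow L^{\gamma_1}(\p D(0,r))$ for $\gamma_1 < \gamma_2$ with embedding constant at most $(2\pi)^{1/\gamma_1 - 1/\gamma_2}$, uniform in $r \le 1$.

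The main obstacle is the uniformity of the constant over all interior circles $\p D(0,r)$, $0 < r \le 1$, and I would obtain it by rescaling the trace inequality from $\p D$ to $\p D(0,r)$ via $w \mapsto u(rw)$ and tracking the powers of $r$. A direct computation gives, for $u \in W^{1,q}(D)$,
\[
\|u\|_{L^\gamma(\p D(0,r))} \le C\left(r^{\frac1\gamma + 1 - \frac2q}\|\nabla u\|_{L^q(D(0,r))} + r^{\frac1\gamma - \frac2q}\|u\|_{L^q(D(0,r))}\right),
\]
and at the critical exponent $\gamma = \tfrac{q}{2-q}$ the power on the gradient term is exactly $\tfrac1\gamma + 1 - \tfrac2q = 0$, so that term is dominated by $\|\nabla u\|_{L^q(D)}$ uniformly in $r$. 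This scale invariance at criticality is the mechanism behind the $r$-independent constant. For $\gamma$ strictly below critical the gradient exponent becomes positive, giving decay in $r$, and the lower-order term is absorbed using the room in the strict inequality together with $\|u\|_{L^q(D(0,r))} \le \|u\|_{L^q(D)}$; alternatively one subtracts the mean of $u$ over concentric annuli and applies a Poincaré inequality to remove it cleanly. Taking $u = T_D f$ and combining with the first paragraph yields $\|T_D f\|_{L^\gamma(\p D(0,r))} \le C\|f\|_{L^q(D)}$ with $C$ independent of $r$.

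Equivalently, one can argue by duality: the adjoint of $f \mapsto T_D f|_{\p D(0,r)}$ is an operator with kernel $|\zeta - z|^{-1}$ mapping $L^{\gamma'}(\p D(0,r)) \to L^{q'}(D)$, and the desired bound is the mixed-dimension Hardy--Littlewood--Sobolev estimate for that kernel from a one-dimensional curve into the plane. The scaling-critical relation there is $q' = 2\gamma'$, and the hypothesis $\gamma < \tfrac{q}{2-q}$ is precisely the subcritical condition $q' < 2\gamma'$, with uniformity in $r$ again flowing from the scale invariance of the critical inequality. I would write out the Sobolev-trace route, reserving this duality picture as the conceptual explanation of why the exponent $\tfrac{q}{2-q}$ and the $r$-uniform constant are the natural outcomes.
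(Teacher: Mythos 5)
First, a point of comparison: the paper does not prove this statement at all --- it is quoted verbatim from Klimentov's book (Theorem 1.4.7 of \cite{KlimBook}), so there is no internal proof to measure your argument against. Judged on its own terms, your strategy is reasonable and its first two steps are sound: $T_Df \in W^{1,q}(D)$ with $\|T_Df\|_{W^{1,q}(D)} \le C\|f\|_{L^q(D)}$ (via $\dbar T_D f = f$, the $L^q$-boundedness of the Beurling transform, and Theorem \ref{Vekonepointtwentysix}), and the Sobolev trace embedding $W^{1,q}(D) \hookrightarrow L^{\gamma}(\p D)$ for $\gamma \le \frac{q}{2-q}$, which settles the case $r=1$.

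The genuine gap is in the uniformity in $r$. In your rescaled trace inequality the zero-order term carries the factor $r^{\frac1\gamma - \frac2q}$, and this exponent is strictly negative for \emph{every} admissible pair: $\gamma > 1$ forces $\frac1\gamma < 1$, while $q \le 2$ forces $\frac2q \ge 1$. Consequently your proposed absorption --- ``the room in the strict inequality together with $\|u\|_{L^q(D(0,r))} \le \|u\|_{L^q(D)}$'' --- fails: with the trivial bound the constant blows up like $r^{\frac1\gamma - \frac2q}$ as $r \to 0$, and the strict inequality $\gamma < \frac{q}{2-q}$ only helps the \emph{gradient} term, whose exponent $\frac1\gamma + 1 - \frac2q$ it makes positive; it does nothing for the zero-order term. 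What actually saves the argument is the decay of $\|u\|_{L^q(D(0,r))}$ itself: by H\"older on $D(0,r)$ and the Sobolev embedding $W^{1,q}(D)\hookrightarrow L^{2q/(2-q)}(D)$ (for $q<2$; use any large finite exponent when $q=2$), one gets
\[
\|u\|_{L^q(D(0,r))} \le C\, r\, \|u\|_{W^{1,q}(D)},
\]
which upgrades the zero-order exponent to $\frac1\gamma - \frac2q + 1$, and this is nonnegative precisely when $\gamma \le \frac{q}{2-q}$. With that replacement (or a correctly executed Poincar\'e-type argument, which you only gesture at), your proof closes; as written, the one step asserted to produce the $r$-independent constant is the step that does not hold.
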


\begin{theorem}[Theorem 1.4.8 \cite{KlimBook} p.\,18]\label{onefoureight}
If $f \in L^q(D)$, $1 < q \leq 2$, then 
\[
\lim_{r\nearrow 1} \int_0^{2\pi} |T_D(f)(e^{i\theta}) - T_D(f)(re^{i\theta})|^\gamma \,d\theta = 0, 
\]
for $1 \leq \gamma < \frac{q}{2-q}$. 
\end{theorem}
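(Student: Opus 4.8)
The plan is to deduce the convergence from the uniform-in-$r$ boundary estimate of Theorem~\ref{onefourseven} together with a density argument, exploiting the fact that $T_D(g)$ is continuous up to $\p D$ when $g$ is smooth. First I would fix $\gamma$ with $1 < \gamma < \frac{q}{2-q}$ and, for $f \in L^q(D)$ and $0 < r < 1$, introduce the linear difference operator
\[
S_r(f)(\theta) := T_D(f)(e^{i\theta}) - T_D(f)(re^{i\theta}).
\]
Theorem~\ref{onefourseven} gives $\|T_D(h)\|_{L^\gamma(\p D(0,\rho))} \le C\|h\|_{L^q(D)}$ with $C$ independent of $\rho \in (0,1]$ and of $h$, so by the triangle inequality
\[
\|S_r(f)\|_{L^\gamma(0,2\pi)} \le 2C\,\|f\|_{L^q(D)}
\]
uniformly in $r$. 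This uniform bound is the engine of the argument.

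Next I would treat a dense subclass. The space $C_c^\infty(D)$ is dense in $L^q(D)$ for $1 \le q < \infty$, and for $g \in C_c^\infty(D)$ the Cauchy transform $T_D(g)$ is continuous (indeed Hölder continuous) on all of $\mathbb{C}$, since the kernel $\frac{1}{\zeta - z}$ is locally integrable in two dimensions and depends continuously on $z$, while $\supp g$ stays away from $\p D$ (see \cite{Vek}). Consequently $T_D(g)(re^{i\theta}) \to T_D(g)(e^{i\theta})$ uniformly in $\theta$ as $r \nearrow 1$, whence $\|S_r(g)\|_{L^\gamma(0,2\pi)} \to 0$.

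Then I would combine these by the standard three-term splitting. Given $f \in L^q(D)$ and $\varepsilon > 0$, choose $g \in C_c^\infty(D)$ with $\|f - g\|_{L^q(D)} < \varepsilon$. Linearity and the uniform bound yield
\[
\|S_r(f)\|_{L^\gamma} \le \|S_r(f-g)\|_{L^\gamma} + \|S_r(g)\|_{L^\gamma} \le 2C\varepsilon + \|S_r(g)\|_{L^\gamma},
\]
so letting $r \nearrow 1$ gives $\limsup_{r \nearrow 1}\|S_r(f)\|_{L^\gamma} \le 2C\varepsilon$, and since $\varepsilon$ is arbitrary the limit is $0$. This settles every $\gamma \in (1, \frac{q}{2-q})$. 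The endpoint $\gamma = 1$ follows by Hölder's inequality on the finite-length curve $\p D(0,r)$: fixing some $\gamma_0 \in (1, \frac{q}{2-q})$ one has $\|S_r(f)\|_{L^1} \le (2\pi)^{1-1/\gamma_0}\|S_r(f)\|_{L^{\gamma_0}} \to 0$.

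The main obstacle I anticipate is not the density bookkeeping, which is routine once the two inputs are in place, but securing those inputs cleanly. The first is the uniform-in-$r$ estimate, which is precisely Theorem~\ref{onefourseven} and may be invoked directly. The second, and genuinely delicate, point is the boundary regularity of $T_D(g)$ for the dense class: one must verify that the Cauchy transform of a smooth compactly supported function extends continuously across $\p D$ with no loss of regularity near the boundary, so that restriction to the circles $\p D(0,r)$ converges uniformly. Once this classical property of $T_D$ is established, the $\varepsilon$-argument closes the proof.
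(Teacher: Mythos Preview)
Your argument is correct. The paper does not supply its own proof of this statement: it is quoted verbatim from \cite{KlimBook} (Theorem~1.4.8) as a background fact, so there is no in-paper proof to compare against. The density argument you give---uniform operator bound from Theorem~\ref{onefourseven}, continuity of $T_D(g)$ across $\p D$ for $g\in C_c^\infty(D)$ (which follows, e.g., from Vekua's H\"older regularity since such $g$ lie in $L^s(D)$ for every $s>2$), then the standard $\varepsilon$-splitting---is exactly the textbook route and is likely what \cite{KlimBook} does as well. One cosmetic remark: the uniform bound $\|S_r(f)\|_{L^\gamma}\le 2C\|f\|_{L^q}$ need only hold for $r$ bounded away from $0$ (say $r\ge 1/2$), which is all your $\limsup$ argument requires; depending on whether the $L^\gamma(\p D(0,r))$ norm in Theorem~\ref{onefourseven} is taken with respect to arclength or $d\theta$, a harmless factor of $r^{-1/\gamma}$ may appear, but this is bounded on $[1/2,1]$.
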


\begin{deff}
For $f: \mathbb{C} \to \mathbb{C}$ and $z  \in \mathbb{C}$, we denote by $\widetilde{T}(\cdot)$ the integral operator defined by 
\[
\widetilde{T}(f)(z) = -\frac{1}{\pi} \iint_D \left(\frac{f(\zeta)}{\zeta - z} + \frac{z\overline{f(\zeta)}}{1-\overline{\zeta}z} \right)\, d\xi\,d\eta,
\]
whenever the integral is defined.
\end{deff}

Similarly to $T_D(\cdot)$, $\widetilde{T}(f)$ is defined for any $f \in L^1(D)$, see Theorem 1.4.2 of \cite{KlimBook} (where the result is cited from \cite{Vek}). Also from p.16 of \cite{KlimBook}, the operator $\widetilde{T}(\cdot)$ shares the property of being a right inverse to the Cauchy-Riemann operator, which is well-known for the operator $T_D(\cdot)$ (see \cite{Vek}), i.e., 
\[
\frac{\p}{\p\z} \widetilde{T}(f) = f,
\]
whenever $\widetilde{T}(f)$ is defined. Note that Theorems \ref{Vekonepointtwentysix} and \ref{onefourseven} also hold for $\widetilde{T}(\cdot)$, see their cited references. Observe that
\[
\frac{f(\zeta)}{\zeta - z} + \frac{z\overline{f(\zeta)}}{1-\overline{\zeta}z} = \frac{f(\zeta)}{\zeta - z} - \overline{\left( \frac{f(\zeta)}{\zeta - z} \right)} = 2i \im\left\{\frac{f(\zeta)}{\zeta - z} \right\}
\]
for $|z| = 1$, so 
\[
\re\{(\widetilde{T}(f))_b\} = 0.
\]

We will need one more integral operator before proceeding.

\begin{deff}
For $f: \mathbb{C} \to \mathbb{C}$ and $z \in \mathbb{C}$, we denote by $\mathcal{T}_D(\cdot)$ the integral operator defined by 
\[
\mathcal{T}_D(f)(z) = -\frac{1}{2\pi} \iint_D \left(\frac{f(\zeta)}{\zeta} \frac{\zeta+z}{\zeta - z} + \frac{\overline{f(\zeta)}}{\overline{\zeta}} \frac{1+z\overline{\zeta}}{1-z\overline{\zeta}} \right)\, d\xi\,d\eta,
\]
whenever the integral is defined.
\end{deff}

From \cite{BegBook}, $\mathcal{T}_D$ is well-defined for all $f \in L^1(D)$. By Theorem 2.1 in \cite{Beg}, the Schwarz boundary value problem 
\begin{numcases}{}
\frac{\p w}{\p\z} = f \nonumber \\
\re{w|_{\p D}} = h \nonumber\\
\im{w(0)} = c \nonumber,
\end{numcases}
for $f \in L^1(D)$, real-valued $h \in C(\p D)$, and $c \in \mathbb{R}$, is uniquely solved by the function 
\begin{align*}
w(z) &= ic + \frac{1}{2\pi i} \int_{|\zeta| = 1} h(\zeta) \,\frac{\zeta + z}{\zeta - z}\,\frac{d\zeta}{\zeta} - \frac{1}{2\pi}\iint_{|\zeta|<1} \left( \frac{f(\zeta)}{\zeta}\,\frac{\zeta + z}{\zeta - z} + \frac{\overline{f(\zeta)}}{\overline{\zeta}}\,\frac{1+z\overline{\zeta}}{1-z\overline{\zeta}}\right)\,d\xi\,d\eta \\
&=  ic + \frac{1}{2\pi i} \int_{|\zeta| = 1} h(\zeta) \,\frac{\zeta + z}{\zeta - z}\,\frac{d\zeta}{\zeta} + \mathcal{T}_D(f)(z),
\end{align*}
because $\mathcal{T}_D(\cdot)$ is also a right-inverse to $\frac{\p}{\p\z}$ like $T_D(\cdot)$ and $\widetilde{T}(\cdot)$, has zero real part on the boundary like $\widetilde{T}(\cdot)$, and has zero imaginary part when $z=0$, which can be seen by direct evaluation. These claims are clear by rewriting the operator in the following way (see \cite{hoio}):
\begin{align*}
\mathcal{T}_D(f)(z) &= -\frac{1}{2\pi} \iint_D \left(\frac{f(\zeta)}{\zeta} \frac{\zeta+z}{\zeta - z} + \frac{\overline{f(\zeta)}}{\overline{\zeta}} \frac{1+z\overline{\zeta}}{1-z\overline{\zeta}} \right)\, d\xi\,d\eta \\
&= -\frac{1}{\pi} \iint_D \left(\frac{f(\zeta)}{\zeta - z} + \frac{z\overline{f(\zeta)}}{1-\overline{\zeta}z} \right)\, d\xi\,d\eta + \frac{1}{2\pi} \iint_D \frac{f(\zeta)}{\zeta}\,d\xi\,d\eta - \frac{1}{2\pi} \iint_D \frac{\overline{f(\zeta)}}{\overline{\zeta}}\,d\xi\,d\eta \\
&= \widetilde{T}(f)(z)+ \frac{1}{2\pi} \iint_D \frac{f(\zeta)}{\zeta}\,d\xi\,d\eta - \frac{1}{2\pi} \iint_D \frac{\overline{f(\zeta)}}{\overline{\zeta}}\,d\xi\,d\eta .
\end{align*}
Note that the last two integrals are simply numbers. Hence, the analogous statements of Theorems \ref{Vekonepointtwentysix} and \ref{onefourseven} also hold for this operator.

Higher-order variations of the Schwarz boundary value problem are solvable by utilizing iterations of the operator $\mathcal{T}_D(\cdot)$. From \cite{Beg}, we have the following.

\begin{prop}\label{iteratedT}
For $f \in L^1(D)$ and $n \in \mathbb{N}$,
\[
\mathcal{T}_D^n(f)(z) = - \frac{1}{2\pi}\iint_{|\zeta|<1} \left( \frac{f(\zeta)}{\zeta}\,\frac{\zeta + z}{\zeta - z} + \frac{\overline{f(\zeta)}}{\overline{\zeta}}\,\frac{1+z\overline{\zeta}}{1-z\overline{\zeta}}\right)(\zeta - z+\overline{\zeta - z})^{n-1}\,d\xi\,d\eta,
\]
where $\mathcal{T}_D^n(f)$ is the operator $\mathcal{T}_D$ applied  $n$-times to the function $f$.
\end{prop}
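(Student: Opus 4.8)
The plan is to argue by induction on $n$. The base case $n=1$ is just the definition of $\mathcal{T}_D(\cdot)$, since $(\zeta-z+\overline{\zeta-z})^{0}=1$. For the inductive step I would avoid expanding the iterated double integral and instead lean on the well-posedness of the Schwarz problem recorded above (Theorem 2.1 of \cite{Beg}): because $\mathcal{T}_D^{n+1}(f)=\mathcal{T}_D\bigl(\mathcal{T}_D^{n}(f)\bigr)$ and $\mathcal{T}_D(g)$ is the \emph{unique} solution of the Schwarz problem with right-hand side $g=\mathcal{T}_D^{n}(f)$ and $h=0$, $c=0$, it is enough to show that the right-hand side of the claimed identity --- write it $w_{n+1}(z)$ --- satisfies the three conditions $\frac{\p}{\p\z}w_{n+1}=\mathcal{T}_D^{n}(f)$, $\re\{w_{n+1}|_{\p D}\}=0$, and $\im\{w_{n+1}(0)\}=0$. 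I would first note that the iterates are all well defined: by Theorem \ref{Vekonepointtwentysix} and its analogue for $\mathcal{T}_D$ the operator raises integrability, so each $\mathcal{T}_D^{k}(f)$ lies in some $L^{\gamma}(D)$ with $\gamma>1$ and the compositions are legitimate.

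The two pointwise conditions should follow from the algebraic symmetries already used in the excerpt. At $z=0$ the integrand collapses to $\bigl(\tfrac{f}{\zeta}+\tfrac{\overline f}{\overline\zeta}\bigr)(\zeta+\overline\zeta)^{n}=2\re\{f/\zeta\}\,(2\re\zeta)^{n}$, which is real, so $w_{n+1}(0)\in\R$ and $\im\{w_{n+1}(0)\}=0$. On $|z|=1$ the same manipulation that gives $\re\{(\widetilde{T}(f))_{b}\}=0$ shows that $\tfrac{f}{\zeta}\tfrac{\zeta+z}{\zeta-z}+\tfrac{\overline f}{\overline\zeta}\tfrac{1+z\overline\zeta}{1-z\overline\zeta}$ is purely imaginary there; multiplying by the real factor $(\zeta-z+\overline{\zeta-z})^{n}=(2\re(\zeta-z))^{n}$ keeps it purely imaginary, so $w_{n+1}$ has vanishing real part on $\p D$. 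Turning the boundary statement into an honest limit is where I would invoke the $L^{\gamma}$ boundary-value results (Theorems \ref{onefourseven} and \ref{onefoureight}, with their $\mathcal{T}_D$ analogues).

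The crux, and the step I expect to be the main obstacle, is the Cauchy--Riemann identity $\frac{\p}{\p\z}w_{n+1}=\mathcal{T}_D^{n}(f)$. One differentiates under the integral, and the delicate feature is the singular kernel $\tfrac{1}{\zeta-z}$. The useful observation is that the factor $\zeta-z+\overline{\zeta-z}=2\re(\zeta-z)$ vanishes at $\zeta=z$, so for exponent at least $1$ it both tames the $\tfrac{1}{\zeta-z}$ singularity and annihilates the distributional term $\frac{\p}{\p\z}\tfrac{1}{\zeta-z}=-\pi\delta(\zeta-z)$ that produces $\frac{\p}{\p\z}\mathcal{T}_D=\mathrm{id}$ in the base case. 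Hence only the classical product-rule contribution survives, and $\frac{\p}{\p\z}$ lands on the power via the chain rule. This is precisely where the constant in front of the kernel is pinned down, and it is the point I would check most carefully: differentiating $(\,\cdot\,)^{n}$ produces a factor proportional to $n$, so the induction closes only if the kernel carries a compensating normalization. Tracking this through, the computation appears to force the factor $\tfrac{(-1)^{n-1}}{(n-1)!}$ --- i.e.\ the kernel is most safely read as $\dfrac{(z-\zeta+\overline{z-\zeta})^{n-1}}{(n-1)!}$ --- so I would reconcile the stated power against this normalization, checking it on the base case $n=1$, before declaring the induction complete.

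As an alternative closer to \cite{Beg}, one can substitute the inductive formula directly into $\mathcal{T}_D(\mathcal{T}_D^{n}(f))$, interchange the order of integration (justified by Theorem \ref{Vekonepointtwentysix}), and evaluate the inner $\zeta$-integral. The obstacle in that route is the reproducing integral identity for the Schwarz--Poisson kernel that raises the exponent from $n-1$ to $n$; this identity supplies the same normalizing constant, so the two approaches should agree.
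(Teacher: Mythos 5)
The paper never proves this proposition---it is transcribed from \cite{Beg} with no argument---so there is nothing internal to compare your proof against; your induction-plus-uniqueness scheme is the natural route. The substantive issue is your closing worry about normalization, and you should trust it: the identity as printed cannot hold for $n\geq 2$, and your own inductive step is exactly what detects this. Writing $v_n(z)$ for the displayed integral, the differentiation you describe gives, distributionally in $z$,
\[
\frac{\p}{\p\z}\,v_{n+1}(z) = -\frac{1}{2\pi}\iint_{|\zeta|<1} \left( \frac{f(\zeta)}{\zeta}\,\frac{\zeta + z}{\zeta - z} + \frac{\overline{f(\zeta)}}{\overline{\zeta}}\,\frac{1+z\overline{\zeta}}{1-z\overline{\zeta}}\right)\bigl(-n\bigr)(\zeta - z+\overline{\zeta - z})^{n-1}\,d\xi\,d\eta = -n\,v_n(z),
\]
the delta contribution vanishing because $\zeta - z + \overline{\zeta-z}$ is zero at $\zeta = z$; but closing the induction requires $\frac{\p}{\p\z}\mathcal{T}_D^{n+1}(f) = \mathcal{T}_D^{n}(f)$. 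Hence $v_{n+1}$ solves the Schwarz problem with right-hand side $-n\,v_n$ rather than $v_n$, and the recursion forces $\mathcal{T}_D^{n}(f) = \frac{(-1)^{n-1}}{(n-1)!}\,v_n$, i.e.\ the kernel must be read as $\frac{(z-\zeta+\overline{z-\zeta})^{n-1}}{(n-1)!}$, exactly as you suspected. A concrete counterexample settles it: for $f \equiv 1$ one computes directly from the definition (using $T_D(1) = \z$ on $D$) that $\mathcal{T}_D(1)(z) = \z - z$, and then
\[
\mathcal{T}_D^2(1)(0) = -\frac{1}{2\pi}\iint_{|\zeta|<1} \frac{(\overline{\zeta}-\zeta)^2}{|\zeta|^2}\,d\xi\,d\eta = 1,
\qquad
v_2(0) = -\frac{1}{2\pi}\iint_{|\zeta|<1} \frac{(\zeta+\overline{\zeta})^2}{|\zeta|^2}\,d\xi\,d\eta = -1,
\]
so the two sides of the stated identity already disagree by a sign at $n=2$, and in general by the factor $(-1)^{n-1}(n-1)!$.

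Two smaller repairs to your outline. First, the uniqueness statement quoted from Theorem 2.1 of \cite{Beg} concerns continuous real-valued boundary data, whereas for $f \in L^1(D)$ the iterates $\mathcal{T}_D^k(f)$ have boundary values only in the $L^\gamma$ sense of Theorems \ref{onefourseven} and \ref{onefoureight}; the appeal to uniqueness should be replaced by the elementary argument it encodes: the difference of two candidate solutions is holomorphic with vanishing real part of its boundary value, hence an imaginary constant, which the point condition at $z=0$ kills. Second, your well-definedness remark is fine as stated (Theorem \ref{Vekonepointtwentysix} with $q=1$ and its $\mathcal{T}_D$ analogue put each iterate back in $L^1(D)$), and both symmetry claims (real integrand at $z=0$, purely imaginary integrand on $|z|=1$) are correct. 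With the kernel renormalized as above your induction closes; note also that the same missing factor $\frac{(-1)^{n-1}}{(n-1)!}$ propagates into the paper's quoted version of Theorem 3.4 of \cite{Beg} and into Theorem \ref{higherschwarz}, so the correction is not cosmetic.
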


\subsection{Boundary values in the sense of distributions}

 First, we define a boundary value in the sense of distributions as in \cite{GHJH2} and \cite{WB}. 

\begin{deff}\label{bvcircle}Let $f$ be a function defined on $D$. We say that $f$ has a boundary value in the sense of distributions, denoted by $f_b \in \mathcal{D}'(\p D)$, if, for every $ \varphi \in C^\infty(\partial D)$, the limit
            \[
             \langle f_b, \varphi \rangle := \lim_{r \nearrow 1} \int_0^{2\pi} f(re^{i\theta}) \, \varphi(\theta) \,d\theta
            \]
            exists.
            
\end{deff}

We now define the space of holomorphic functions associated with this kind of boundary values.

\begin{deff}
We define $H_b$ to be that subset of functions in $H(D)$ that have boundary values in the sense of distributions.
\end{deff}

From Theorem 2.2, Theorem 3.1, and Corollary 3.1 of \cite{GHJH2}, we know that the functions $w \in \Har$ have boundary values in the sense of distributions $w_b$, and that $\cup_{0<p\leq 1} H^p(D)$ is a proper subset of $H_b$, see \cite{Duren} for an example of a holomorphic function of tempered growth that has nontangential boundary values almost nowhere.

\subsection{Schwarz Boundary Value Problem}\label{SchwarzSection}

We prove the following theorem which extends the result of \cite{Beg} to a more general boundary condition.

\begin{theorem}
The Schwarz boundary value problem 
\begin{numcases}{}
\frac{\p w}{\p\z} = f \nonumber \\
\re{w_b} = \re{h_b} \nonumber\\
\im{w(0)} = c, \nonumber
\end{numcases}
for $f \in L^1(D)$, $h \in  H_b$, and $c \in \mathbb{R}$, is solved by the function 
\[
w(z) = ic  - I + \frac{1}{2\pi } \langle h_b, P_r(\theta - \cdot)\rangle - \frac{1}{2\pi}\iint_{|\zeta|<1} \left( \frac{f(\zeta)}{\zeta}\,\frac{\zeta + z}{\zeta - z} + \frac{\overline{f(\zeta)}}{\overline{\zeta}}\,\frac{1+z\overline{\zeta}}{1-z\overline{\zeta}}\right)\,d\xi\,d\eta,
\]
where 
\[
P_r(\theta) = \frac{1-r^2}{1-2r\cos(\theta)+r^2}, 
\]
and
\[
I:= \frac{i}{2\pi}\langle \im h_b, 1\rangle .
\]
\end{theorem}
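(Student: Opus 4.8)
The plan is to write the candidate as
\[
w(z) = (ic - I) + \frac{1}{2\pi}\langle h_b, P_r(\theta-\cdot)\rangle + \mathcal{T}_D(f)(z),
\]
the third summand being exactly $\mathcal{T}_D(f)(z)$, and to check the three conditions separately. The decisive structural input is the representation theorem (Theorem 3.1 of \cite{GHJH2}): since $h \in H_b$ is holomorphic with distributional boundary value $h_b$, its Poisson integral reproduces it, so that $\frac{1}{2\pi}\langle h_b, P_r(\theta-\cdot)\rangle = h(re^{i\theta})$; in particular this term is holomorphic in $z$. The interior equation then follows by differentiating term by term: the purely imaginary constant $ic - I$ and the holomorphic term $h$ are annihilated by $\frac{\p}{\p\z}$, while $\frac{\p}{\p\z}\mathcal{T}_D(f) = f$ because $\mathcal{T}_D(\cdot)$ is a right inverse to the Cauchy--Riemann operator, as recorded above. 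Hence $\frac{\p w}{\p\z} = f$.

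For the boundary condition I would first argue that $w$ possesses a boundary value in $\dc$ and that $\re\{\cdot\}$ may be taken inside the boundary limit. The constant $ic - I$ is purely imaginary and contributes nothing to $\re\{w_b\}$; the term $h$ contributes $\re\{h_b\}$ by the very definition of $h_b$; and for $\mathcal{T}_D(f)$ I would use the decomposition recorded above,
\[
\mathcal{T}_D(f) = \widetilde{T}(f) + \frac{1}{2\pi}\iint_D\frac{f(\zeta)}{\zeta}\,d\xi\,d\eta - \frac{1}{2\pi}\iint_D\frac{\overline{f(\zeta)}}{\overline{\zeta}}\,d\xi\,d\eta,
\]
whose last two terms form a purely imaginary constant, together with the identity $\re\{(\widetilde{T}(f))_b\} = 0$ established above. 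Existence of the boundary trace of $\mathcal{T}_D(f)$ is furnished by the analogues of Theorems \ref{onefourseven} and \ref{onefoureight}. Summing the three contributions yields $\re\{w_b\} = \re\{h_b\}$.

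For the normalization at the origin I would evaluate at $z = 0$. Because $P_0 \equiv 1$, the middle term equals $h(0) = \frac{1}{2\pi}\langle h_b, 1\rangle$, whose imaginary part is $\frac{1}{2\pi}\langle \im h_b, 1\rangle$; this is cancelled exactly by $-\im\{I\} = -\frac{1}{2\pi}\langle \im h_b, 1\rangle$, which is precisely the purpose of the correction term $I$. Direct evaluation gives $\im\{\mathcal{T}_D(f)(0)\} = 0$, and $\im\{ic\} = c$, so that $\im\{w(0)\} = c$.

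I expect the main obstacle to be the boundary-value step: establishing rigorously that $w_b$ exists in $\dc$ and that the real part commutes with the distributional limit, which requires reconciling the definitional boundary value of the holomorphic term $h$ with the boundary convergence of $\mathcal{T}_D(f)$ supplied by the cited $L^\gamma$-estimates. The reproducing identity for $h$ is the key analytic ingredient but is imported from \cite{GHJH2}; the genuinely new bookkeeping lies in checking that the purely imaginary quantities $ic - I$ and $\mathcal{T}_D(f) - \widetilde{T}(f)$, and the correction $I$, are arranged so that the boundary condition and the pointwise condition hold simultaneously.
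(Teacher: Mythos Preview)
Your proposal is correct and follows essentially the same approach as the paper: identify $\frac{1}{2\pi}\langle h_b, P_r(\theta-\cdot)\rangle$ with $h(z)$ via Theorem~3.1 of \cite{GHJH2}, use that $\mathcal{T}_D$ is a right inverse to $\frac{\p}{\p\z}$ for the interior equation, verify $\re\{w_b\}=\re\{h_b\}$ by showing the remaining terms are purely imaginary on the boundary, and check the normalization at $z=0$ by direct evaluation. The only cosmetic difference is that for the boundary step you invoke the decomposition $\mathcal{T}_D(f)=\widetilde{T}(f)+(\text{purely imaginary constant})$ together with the pre-recorded identity $\re\{(\widetilde{T}(f))_b\}=0$, whereas the paper manipulates the integrand of $\mathcal{T}_D(f)$ directly on $|z|=1$ to exhibit it as $2i\,\im\{\cdot\}$; these are the same computation packaged two ways.
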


\begin{proof}
Since $h \in H_b$, it follows from Theorem 3.1 of \cite{GHJH2} that 
\[
h(re^{i\theta}) = \frac{1}{2\pi}\langle h_b, P_r(\theta - \cdot) \rangle.
\]
From \cite{Beg} and \cite{BegBook}, we know that $\mathcal{T}_D(\cdot)$ is a right-inverse operator of $\frac{\p}{\p\z}$. So, , 
\begin{align*}
\frac{\p}{\p\z}\left( ic - I + \frac{1}{2\pi } \langle h_b, P_r(\theta - \cdot)\rangle - \frac{1}{2\pi}\iint_{|\zeta|<1} \left( \frac{f(\zeta)}{\zeta}\,\frac{\zeta + z}{\zeta - z} + \frac{\overline{f(\zeta)}}{\overline{\zeta}}\,\frac{1+z\overline{\zeta}}{1-z\overline{\zeta}}\right)\,d\xi\,d\eta\right) = f.
\end{align*}
Since 
\[
h_b = \left(\frac{1}{2\pi}\langle h_b, P_r(\theta - \cdot) \rangle\right)_b
\]
 and by Theorem \ref{onefoureight}
 \[
T(f)(z)|_{\p D} = T(f)_b
 \]
as distributions, it follows that, for $|z| = 1$,  
\begin{align*}
w_b &= ic - I + h_b  - \frac{1}{2\pi}\iint_{|\zeta|<1} \left( \frac{f(\zeta)}{\zeta}\,\frac{\zeta + z}{\zeta - z} + \frac{\overline{f(\zeta)}}{\overline{\zeta}}\,\frac{1+z\overline{\zeta}}{1-z\overline{\zeta}}\right)\,d\xi\,d\eta \\
&= ic - I + h_b  - \frac{1}{2\pi}\iint_{|\zeta|<1} \left( \frac{f(\zeta)}{\zeta}\,\frac{\zeta + z}{\zeta - z} - \frac{\overline{f(\zeta)}}{\overline{\zeta}}\,\frac{\overline{\zeta + z} }{\overline{\zeta -z}}\right)\,d\xi\,d\eta \\
&= ic - I + h_b  - \frac{i}{\pi}\iint_{|\zeta|<1} \im\left\{ \frac{f(\zeta)}{\zeta}\,\frac{\zeta + z}{\zeta - z} \right\}\,d\xi\,d\eta .
\end{align*}
Since $ic$, $I$, and the integral are purely imaginary, it follows that 
\[
\re\{w_b\} = \re\{h_b\}.
\]

Now, evaluating when $z = 0 $, we have
\begin{align*}
w(0) &= ic - I + \frac{1}{2\pi } \langle h_b, P_0(\theta - \cdot)\rangle - \frac{1}{2\pi}\iint_{|\zeta|<1} \left( \frac{f(\zeta)}{\zeta}\,\frac{\zeta + 0}{\zeta - 0} + \frac{\overline{f(\zeta)}}{\overline{\zeta}}\,\frac{1+0\cdot\overline{\zeta}}{1-0\cdot\overline{\zeta}}\right)\,d\xi\,d\eta \\
&= ic - I + \frac{1}{2\pi } \langle h_b, 1\rangle - \frac{1}{2\pi}\iint_{|\zeta|<1} \left( \frac{f(\zeta)}{\zeta} + \frac{\overline{f(\zeta)}}{\overline{\zeta}}\right)\,d\xi\,d\eta \\
&= ic - I + \frac{1}{2\pi } \langle h_b, 1\rangle - \frac{1}{\pi}\iint_{|\zeta|<1} \re\left\{ \frac{f(\zeta)}{\zeta} \right\}\,d\xi\,d\eta \\
&= ic + \frac{1}{2\pi}\langle \re h_b, 1 \rangle - \frac{1}{\pi}\iint_{|\zeta|<1} \re\left\{ \frac{f(\zeta)}{\zeta} \right\}\,d\xi\,d\eta .
\end{align*}
Thus, 
\[
\im\{w(0)\} = c.
\]
\end{proof}

\begin{comm}
If, in the above theorem, $f \in L^q(D)$, $q>1$, and $h \in H^p(D)$, $0 < p \leq 1$, then $w$ is an element of the class $H^p_{f,q}(D)$ studied in \cite{WB}. 
\end{comm}

\subsection{Extension to higher-order equations}

By Theorem 3.4 in \cite{Beg}, the higher-order variant of the Schwarz boundary value problem 
\begin{numcases}{}
\frac{\p^n w}{\p\z^n} = f, \nonumber\\
\re\left\{\frac{\p^k w}{\p\z^k}|_{\p D}\right\} = h_k, \nonumber\\
\im\left\{\frac{\p^k w}{\p\z^k}(0)\right\} = c_k, \nonumber
\end{numcases}
for $n \in \mathbb{N}$, $f \in L^1(D)$, real-valued $h_k \in C(\p D)$, and $c_k \in \mathbb{R}$, for $k = 0, 1, \ldots, n-1$, is uniquely solved by the function 
\begin{align*}
w(z) &= i\sum_{k = 0}^{n-1}\frac{c_k}{k!}(z+\z)^k + \sum_{k=0}^{n-1}\frac{(-1)^k}{2\pi i k!}\int_{|\zeta|=1} h_k(\zeta) \,\frac{\zeta + z}{\zeta - z}\,(\zeta - z+ \overline{\zeta - z})^k\,\frac{d\zeta}{\zeta} \\
&\quad - \frac{1}{2\pi}\iint_{|\zeta|<1} \left( \frac{f(\zeta)}{\zeta}\,\frac{\zeta + z}{\zeta - z} + \frac{\overline{f(\zeta)}}{\overline{\zeta}}\,\frac{1+z\overline{\zeta}}{1-z\overline{\zeta}}\right)(\zeta - z+\overline{\zeta - z})^{n-1}\,d\xi\,d\eta.
\end{align*}

We define a space that we will use in the theorem that follows. 

\begin{deff}
For each $0 < p \leq 1$, define $\re\{(H^p(D))_b\}$ as the set of real parts of the boundary values in the sense of distributions of the functions in $H^p(D)$. 
\end{deff}

We prove the following theorem which extends the problem to more general boundary conditions.

\begin{theorem}\label{higherschwarz}
The Schwarz boundary value problem 
\begin{numcases}{}
\frac{\p^n w}{\p\z^n} = f \nonumber \\
\re\left\{\left(\frac{\p^k w}{\p\z^k}\right)_b\right\} = h_k , \quad 1 \leq k \leq n -1 \nonumber\\
\re\{w_b\} = \re\{(h_0)_b\}  \nonumber\\
\im\left\{\frac{\p^k w}{\p\z^k}(0)\right\} = c_k, \quad 0 \leq k \leq n -1, \nonumber
\end{numcases}
for $n \in \mathbb{N}$, $f \in L^1(D)$, $h_k \in \re\{(H^{p_k}(D))_b\}$ where $\frac{1}{2} < p_k \leq 1$ for $k = 1, 2, \ldots, n-1$, $h_0 \in H_b$, and $c_k \in \mathbb{R}$ for $k = 0, 1, 2,\ldots, n-1$, is solved by the function 
\begin{align*}
w(z) &= i\sum_{k = 1}^{n-1}\frac{c_k}{k!}(z+\z)^k  - I + \frac{1}{2\pi} \langle (h_0)_b, P_r(\theta - \cdot)\rangle \\
&\quad+ \sum_{k=1}^{n-1}\frac{(-1)^k}{2\pi   k!} \langle h_k, (P_r(\theta - \cdot)+iQ_r(\theta - \cdot))(e^{i(\cdot)} - re^{i\theta}+\overline{e^{i(\cdot)} - re^{i\theta}})^k\rangle \\
&\quad - \frac{1}{2\pi}\iint_{|\zeta|<1} \left( \frac{f(\zeta)}{\zeta}\,\frac{\zeta + z}{\zeta - z} + \frac{\overline{f(\zeta)}}{\overline{\zeta}}\,\frac{1+z\overline{\zeta}}{1-z\overline{\zeta}}\right)(\zeta - z+\overline{\zeta - z})^{n-1}\,d\xi\,d\eta,
\end{align*}
where 
\[
I = \frac{1}{2\pi}\langle \im\{(h_0)_b\}, 1\rangle,
\]
\[
P_r(\theta) = \frac{1-r^2}{1-2r\cos(\theta)+r^2}, 
\]
\[
Q_r(\theta) = \frac{2r\sin(\theta)}{1-2r\cos(\theta) + r^2},
\]
and $z = re^{i\theta}$.
\end{theorem}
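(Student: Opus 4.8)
The plan is to recognize the proposed $w$ as the distributional analogue of the solution to Begehr's higher-order problem recalled above from \cite{Beg}, obtained by replacing each boundary Schwarz--Poisson integral $\frac{(-1)^k}{2\pi i k!}\int_{|\zeta|=1}h_k(\zeta)\frac{\zeta+z}{\zeta-z}(\zeta-z+\overline{\zeta-z})^k\frac{d\zeta}{\zeta}$ with the pairing $\frac{(-1)^k}{2\pi k!}\langle h_k,(P_r(\theta-\cdot)+iQ_r(\theta-\cdot))(e^{i(\cdot)}-z+\overline{e^{i(\cdot)}-z})^k\rangle$. The bridge between the two is the Schwarz-kernel identity $\frac{e^{i\phi}+z}{e^{i\phi}-z}=P_r(\theta-\phi)+iQ_r(\theta-\phi)$ for $z=re^{i\theta}$, together with the reproducing formula $h(re^{i\theta})=\frac{1}{2\pi}\langle h_b,P_r(\theta-\cdot)\rangle$ of Theorem 3.1 of \cite{GHJH2}, which identifies the $h_0$ term with a genuine holomorphic function on $D$ whose boundary value in the sense of distributions is $(h_0)_b$. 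I would then verify the four conditions in turn, reducing each to a bookkeeping of real versus imaginary parts as in the first-order theorem above.

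For the differential equation I would note that the volume integral is exactly $\mathcal{T}_D^n(f)$ by Proposition \ref{iteratedT}, and since $\mathcal{T}_D^n$ is an $n$-fold right inverse of $\partial/\partial\bar z$, its $n$-th $\bar z$-derivative returns $f$. Every other term is annihilated by $\partial^n/\partial\bar z^n$: the polynomial summands have $\bar z$-degree at most $n-1$ through the factor $(z+\bar z)^k$; the $h_0$ term is holomorphic; and each $h_k$ term carries the real factor $(e^{i\phi}-z+\overline{e^{i\phi}-z})^k=(2\re(e^{i\phi}-z))^k$ of $\bar z$-degree $k\le n-1$, with differentiation passing through the pairing onto the test function. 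Hence $\partial^n w/\partial\bar z^n=f$.

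For the zeroth-order boundary condition I would argue as in the first-order theorem: the $h_0$ term reproduces $(h_0)_b$ on $\partial D$, while all remaining terms have purely imaginary boundary values and so leave the real part untouched. On $|z|=1$ the volume integrand equals, via $\frac{1+z\bar\zeta}{1-z\bar\zeta}=-\overline{\frac{\zeta+z}{\zeta-z}}$, a purely imaginary quantity times the real factor $(2\re(\zeta-z))^{n-1}$; the polynomial terms are $i$ times real quantities; and $I$ is a purely imaginary constant fixed, exactly as before, to cancel the imaginary contribution of the $h_0$ term at the origin. The $h_k$ terms with $k\ge1$ contribute nothing to the boundary real part because, as $r\nearrow1$, the concentrating Poisson part $P_r$ is multiplied by $(2\re(e^{i\phi}-e^{i\theta}))^k$, which vanishes to order $k\ge1$ at $\phi=\theta$ and annihilates the kernel. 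The point conditions then follow by direct evaluation at $z=0$, where $P_r+iQ_r$ reduces to $1$, every $(z+\bar z)$ factor vanishes except in the degree-matched polynomial term supplying $ic_k$, and all other contributions are real, giving $\im\{\partial^k w/\partial\bar z^k(0)\}=c_k$.

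The substantive step, and the one I expect to be the main obstacle, is the family of derivative boundary conditions $\re\{(\partial^k w/\partial\bar z^k)_b\}=h_k$ for $1\le k\le n-1$. Here I would apply $\partial^k/\partial\bar z^k$, interchanging it with each pairing so that it falls on the test function. This annihilates outright every summand whose real factor has $\bar z$-degree $j<k$ (in particular the holomorphic $h_0$ term and the constant $I$); it reduces the $j=k$ summand to precisely $\frac{1}{2\pi}\langle h_k,(P_r+iQ_r)\rangle$, since the combination $P_r+iQ_r$ is the holomorphic Schwarz kernel and is unaffected while $\partial^k/\partial\bar z^k(2\re(e^{i\phi}-z))^k=(-1)^kk!$, and the boundary real part of this reconstruction is $h_k$ by the reproducing formula; it leaves each $j>k$ summand with a real factor still vanishing on the diagonal, hence with zero boundary real part by the concentrating-kernel argument; and it sends the polynomial to $i$ times real quantities and the volume term to $\mathcal{T}_D^{n-k}(f)$, both purely imaginary on $\partial D$. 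The delicate points are justifying the interchange of $\partial^k/\partial\bar z^k$ with the distributional pairing and the ensuing $r\nearrow1$ limit for an $h_k$ that is merely the real part of an $H^{p_k}$ boundary value; this is exactly where the hypothesis $\tfrac12<p_k\le1$ is used, since it forces $h_k$ to be a distribution of order at most one, so that the differentiated, $C^1$-controlled test functions pair admissibly and the vanishing-factor argument genuinely kills the singular Poisson contributions. Once these limits are controlled, the remainder is the same real-versus-imaginary accounting used in the first-order case.
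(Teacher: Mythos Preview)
Your direct-verification strategy differs from the paper's, which argues by iteration: the case $n=1$ is the preceding first-order theorem, and for $n>1$ one solves successively for $\partial^{n-1}w/\partial\bar z^{n-1}$, then $\partial^{n-2}w/\partial\bar z^{n-2}$, and so on, each step being a first-order Schwarz problem whose right-hand side is the solution just constructed. The displayed closed form then arises by applying Fubini to identify $\mathcal{T}_D^k$ of the Schwarz extension of $h_k$ with the pairing $\langle h_k,(P_r+iQ_r)(e^{i(\cdot)}-z+\overline{e^{i(\cdot)}-z})^k\rangle$.

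The substantive gap in your outline is the role you assign to the hypothesis $\tfrac12<p_k\le1$. The paper does not use it to bound the order of $h_k$ as a distribution; it is used to guarantee that the Schwarz extension $g_k(z):=\tfrac{1}{2\pi}\langle h_k,P_r(\theta-\cdot)+iQ_r(\theta-\cdot)\rangle$ belongs to $H^{p_k}(D)\subset L^1(D)$ (the embedding for $p>\tfrac12$ is Lemma~1.8.3 of \cite{KlimBook}). Once $g_k\in L^1(D)$, the operator $\mathcal{T}_D$ may be applied to it, the iteration is well defined, and every boundary and pointwise property you need for the $h_j$ contributions follows from the known behaviour of $\mathcal{T}_D^{\,m}$ on $L^1$ data (real part vanishing on $\partial D$, imaginary part vanishing at $0$). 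Your ``concentrating kernel times vanishing factor'' heuristic does not by itself yield a proof: for $m=j-k=1$ the product $P_r(0)\cdot\bigl(2(1-r)\cos\theta\bigr)=2(1+r)\cos\theta$ does \emph{not} tend to zero as $r\nearrow1$, so the test functions do not converge to $0$ even in $C^0$, and an order-of-distribution bound on $h_j$ cannot force the pairing to vanish. Recasting the $h_j$ terms as $\mathcal{T}_D^{\,j-k}(g_j)$ with $g_j\in L^1(D)$ is what actually closes this step, and it is precisely how the paper proceeds.
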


\begin{proof}
The case $n = 1$ is the last theorem. 

The cases $n > 1$ are proved by observing that the formula is the result of iteration of the $n = 1$ result by the same method as the proof of Theorem 3.4 in \cite{Beg}. 

Our choice of hypothesis is key here. By choosing $h_k \in \re\{(H^{p_k}(D))_b\}$ with $\frac{1}{2} < p_k \leq 1$ here, we know that 
\[
\langle h_k, P_r(\theta - \cdot)+iQ_r(\theta - \cdot)\rangle
\]
is not only an element of $H^{p_k}(D)$, see Theorem 6.2 in \cite{GHJH2}, but the function is in $L^1(D)$, see Lemma 1.8.3 in \cite{KlimBook}. So, the iteration is well-defined, and the result of that iteration is 
\[
\mathcal{T}_D^k(\langle h_k, P_r(\theta - \cdot)+iQ_r(\theta - \cdot)\rangle) = \langle h_k, (P_r(\theta - \cdot)+iQ_r(\theta - \cdot))(e^{i(\cdot)} - re^{i\theta}+\overline{e^{i(\cdot)} - re^{i\theta}})^k\rangle,
\]
 by application of Fubini's theorem. 
 
\end{proof}

\begin{comm}
If, in the last theorem, $f \in L^q(D)$, $q > 1$ and $h_0 \in H^{p_0}(D)$, $0 < p_0 \leq 1$, then the solution $w$ is an element of the class $H^{n,p}_{f,q}(D)$, where $p:= \min_{0 \leq k \leq n-1}\{p_k\}$, that is studied in \cite{WB}.
\end{comm}

\subsection{A special case} Next, we show a certain special case of the first order Schwarz boundary value problem is solvable by appealing to the higher-order extension above. This special case is notable as the associated differential equation has nonhomogeneous part which is not assumed to be in any Lebesgue space. First, we recall some well known representation results. 

\begin{prop}[\cite{Balk} or Theorem 2.1 \cite{polyhardy}]\label{Balkdecomp}
For $n$ a positive integer, the functions $f$ that solve 
\[
\frac{\p^n f}{\p\z^n} = 0
\]
have the form 
\[
f(z) = \sum_{k=0}^{n-1} \z^k f_k(z),
\]
where each $f_k \in H(D)$. 
\end{prop}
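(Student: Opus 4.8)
The plan is to argue by induction on $n$, exploiting the fact that $\frac{\p^n f}{\p\z^n}=0$ is a linear differential equation in the single ``variable'' $\z$ whose one-step antiderivatives can be written down explicitly. For the base case $n=1$, the equation $\frac{\p f}{\p\z}=0$ is precisely the Cauchy--Riemann equation, so $f\in H(D)$ and the asserted representation holds trivially with $f_0=f$.

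For the inductive step I would assume the statement for $n-1$ and take $f$ with $\frac{\p^n f}{\p\z^n}=0$. Setting $g:=\frac{\p f}{\p\z}$, one has $\frac{\p^{n-1}g}{\p\z^{n-1}}=0$, so the induction hypothesis gives $g(z)=\sum_{k=0}^{n-2}\z^k g_k(z)$ with each $g_k\in H(D)$. The heart of the argument is then to antidifferentiate $g$ in $\z$ while preserving this polynomial-in-$\z$ structure: since each $g_k$ is holomorphic (so $\frac{\p g_k}{\p\z}=0$) and $\frac{\p}{\p\z}\bigl(\z^{k+1}\bigr)=(k+1)\z^k$, the function $F(z):=\sum_{k=0}^{n-2}\frac{1}{k+1}\,\z^{k+1}g_k(z)$ satisfies $\frac{\p F}{\p\z}=g$ by the product rule. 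Consequently $\frac{\p(f-F)}{\p\z}=0$, so $f-F=:f_0\in H(D)$; reindexing and writing $f_k:=\frac{1}{k}g_{k-1}$ for $1\le k\le n-1$ yields $f(z)=\sum_{k=0}^{n-1}\z^k f_k(z)$ with every $f_k$ holomorphic, which closes the induction.

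The only delicate point is regularity, since the computations above require $f$ to be differentiable enough for the iterated $\z$-derivatives and the product rule to be meaningful. If $\frac{\p^n f}{\p\z^n}=0$ is read classically for $f\in C^n$ this is automatic; if it is read in the sense of distributions, I would invoke the hypoellipticity (indeed ellipticity) of the operator $\frac{\p}{\p\z}$ to conclude that any solution is smooth, after which the explicit antidifferentiation goes through verbatim. Beyond this regularity bookkeeping the argument is purely algebraic, so I do not expect any substantive obstacle; the one step to state with care is simply the verification that $F$ has the claimed $\z$-derivative, which is where the holomorphicity of the $g_k$ is used.
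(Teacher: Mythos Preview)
Your induction argument is correct and is in fact the standard proof of this decomposition. Note, however, that the paper does not supply its own proof of this proposition: it is quoted as a known result with attribution to \cite{Balk} and \cite{polyhardy}, so there is no in-paper argument to compare against. Your approach is essentially the classical one found in those references.
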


\begin{prop}[Theorem 4.9 \cite{l2poly}]\label{altrepnohardy}
Every $w $ that solves
\[
\frac{\p^n w}{\p\z^n} = f,
\] 
where $f $ solves
\[
\frac{\p^n f}{\p\z^n} = 0
\]
and has representation $f = \sum_{k=0}^{n-1} \z^k f_k(z)$ from Proposition \ref{Balkdecomp}, has the representation
\[
w(z) = w_0 - \sum_{k=1}^n \frac{(-1)^k}{k!}\z^k \frac{\p^{k-1}f}{\p\z^{k-1}}(z),
\]
where $w_0 \in H(D)$.
\end{prop}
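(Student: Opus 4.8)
The plan is to recognize the displayed sum as one explicit antiderivative of $f$ with respect to $\z$ and then to show that any two solutions differ by a single holomorphic function. Write
\[
w_p(z) := -\sum_{k=1}^n \frac{(-1)^k}{k!}\z^k \frac{\p^{k-1}f}{\p\z^{k-1}}(z),
\]
so the asserted identity is exactly $w = w_0 + w_p$. By Proposition~\ref{Balkdecomp} the hypothesis $\frac{\p^n f}{\p\z^n}=0$ gives $f = \sum_{j=0}^{n-1}\z^j f_j$ with each $f_j \in H(D)$; hence $f$ is smooth, every derivative $\frac{\p^{k-1}f}{\p\z^{k-1}}$ appearing in $w_p$ is a genuine function, and $\frac{\p^m f}{\p\z^m}=0$ for all $m\geq n$. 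This vanishing is the only feature of polyanalyticity that the argument will use.

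First I would verify that $w_p$ is a particular solution, i.e.\ that $\frac{\p w_p}{\p\z}=f$. The product rule gives $\frac{\p}{\p\z}\bigl(\z^k \frac{\p^{k-1}f}{\p\z^{k-1}}\bigr)=k\,\z^{k-1}\frac{\p^{k-1}f}{\p\z^{k-1}}+\z^k\frac{\p^{k}f}{\p\z^{k}}$. Substituting this into $\frac{\p w_p}{\p\z}$, using $\frac{k}{k!}=\frac{1}{(k-1)!}$ in the first group of terms and reindexing both resulting sums to a common summation variable, the two families of terms cancel pairwise at every interior index. Precisely two contributions survive: the lowest-order term, which reproduces $f$, and a single top-order term proportional to $\z^n\frac{\p^n f}{\p\z^n}$, which vanishes by polyanalyticity. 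Hence $\frac{\p w_p}{\p\z}=f$.

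The representation then follows from the description of the kernel of $\frac{\p}{\p\z}$. If $w$ is any solution, then $\frac{\p}{\p\z}(w-w_p)=f-f=0$ on $D$, so $w_0:=w-w_p$ is annihilated by $\frac{\p}{\p\z}$ and is therefore holomorphic, $w_0\in H(D)$; rearranging gives precisely the stated formula. I expect the only real obstacle to be the bookkeeping in the cancellation step: one must reindex so that the interior terms $1\leq j\leq n-1$ cancel in pairs, confirm that the $j=0$ term is exactly $f$, and check that the $j=n$ term is the one removed by $\frac{\p^n f}{\p\z^n}=0$. The sole remaining point, in case $w$ is assumed to solve the equation only distributionally, is that $\frac{\p w_0}{\p\z}=0$ still forces holomorphy, which is Weyl's lemma.
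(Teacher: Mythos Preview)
Your argument is correct, and in fact the paper does not supply its own proof of this proposition: it is quoted from \cite{l2poly}. The telescoping computation you outline is exactly the right mechanism, and the appeal to $\ker\frac{\p}{\p\z}=H(D)$ finishes it.

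One point worth flagging. You prove that $\frac{\p w_p}{\p\z}=f$, i.e.\ the \emph{first-order} equation, and then conclude $w_0\in H(D)$ from $\frac{\p}{\p\z}(w-w_p)=0$. This is the version the paper actually uses (see the proof of Theorem~\ref{schwarznoint}, where the proposition is invoked for $\frac{\p w}{\p\z}=f$), and it is the only version consistent with the stated conclusion $w_0\in H(D)$. The displayed hypothesis $\frac{\p^n w}{\p\z^n}=f$ in the proposition appears to be a misprint: under that hypothesis one would only obtain $\frac{\p^n}{\p\z^n}(w-w_p)=0$, so $w_0$ would merely be polyanalytic of order $n$, and the conclusion fails already for $n=2$, $f\equiv 0$, $w=\z$. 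Your reading is the correct one.
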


With these representation formulas in hand, we are ready to take advantage of the additional structure provided a solution of a nonhomogeneous Cauchy-Riemann equation when the nonhomogeneous part itself solves a homogeneous $n$-th order homogeneous Cauchy-Riemann equation. In particular, we solve a first-order Schwarz boundary value problem where the nonhomogeneous part of the corresponding differential equation is not necessarily integrable.

\begin{theorem}\label{schwarznoint}
For every $f$ that solves the Schwarz boundary value problem
\begin{numcases}{}
\frac{\p^n f}{\p\z^n} = 0 \nonumber \\
\re\left\{\left(\frac{\p^k f}{\p\z^k}\right)_b\right\} = h_k , \quad 1 \leq k \leq n -1 \nonumber\\
\re\{f_b\} = \re\{(h_0)_b\} \nonumber\\
\im\left\{\frac{\p^k f}{\p\z^k}(0)\right\} = c_k, \quad 0 \leq k \leq n -1, \nonumber
\end{numcases}
for $n \in \mathbb{N}$, $h_k \in \re\{(H^{p_k}(D))_b\}$ where $\frac{1}{2} < p_k \leq 1$ for $k = 1, 2, \ldots, n-1$, $h_0 \in H_b$, and $c_k \in \mathbb{R}$ for $k = 0, 1, 2,\ldots, n-1$, the first-order Schwarz boundary value problem
\begin{numcases}{}
\frac{\p w}{\p\z} = f  \nonumber\\
\re\{w_b\} = \re\left\{h_b  - \sum_{k=1}^n \frac{(-1)^k}{k!}e^{-ik(\cdot)} \left(\frac{\p^{k-1}f}{\p\z^{k-1}} \right)_b \right\} \nonumber\\
\im\{w(0)\} = c, \nonumber
\end{numcases}
with $h \in  H_b$, and $c \in \mathbb{R}$, is solved by the function
\begin{align*}
w(z) 
& = ic  - I + \frac{1}{2\pi } \langle h_b, P_r(\theta - \cdot)\rangle - \z\left( - \widetilde{I} + \frac{1}{2\pi} \langle (h_0)_b, P_r(\theta - \cdot)\rangle\right) \\
&\quad - i\sum_{k=1}^n\sum_{\ell = k-1}^{n-1} \frac{(-1)^k c_\ell}{k!(\ell - k+1)!}\z^k(z+\z)^{\ell - (k-1)}  \\
&\quad  - \sum_{k=1}^n \sum_{\ell=k-1}^{n-1}\frac{(-1)^{\ell+1}}{2\pi k!(\ell-k+1)!} \langle h_\ell, (P_r(\theta - \cdot)+iQ_r(\theta - \cdot))(e^{i(\cdot)} - re^{i\theta}+\overline{e^{i(\cdot)} - re^{i\theta}})^{\ell-(k-1)}\rangle ,  
\end{align*}
where 
\[
I:= \frac{i}{2\pi}\langle \im h_b, 1\rangle 
\]
and 
\[
\widetilde{I} := \frac{i}{2\pi}\langle \im (h_0)_b, 1\rangle.
\]

\end{theorem}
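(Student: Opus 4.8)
The plan is to reduce Theorem \ref{schwarznoint} to the already-established results by exploiting the explicit representation formulas for the nonhomogeneous part. The key observation is that the nonhomogeneous datum $f$ is not an arbitrary $L^1$ function but is constrained to solve $\frac{\p^n f}{\p\z^n} = 0$, so by Proposition \ref{Balkdecomp} it decomposes as $f = \sum_{k=0}^{n-1}\z^k f_k$ with each $f_k \in H(D)$, and crucially $f$ is itself a solution of the higher-order Schwarz problem solved by Theorem \ref{higherschwarz}. First I would invoke Theorem \ref{higherschwarz} to obtain the explicit closed form for $f$ in terms of the data $h_0, \ldots, h_{n-1}$ and $c_0,\ldots,c_{n-1}$; this is the object whose boundary values and derivative boundary values appear in the prescribed boundary condition for $w$.

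Next I would apply Proposition \ref{altrepnohardy}, which tells us that any solution $w$ of $\frac{\p w}{\p\z} = f$ (with $f$ annihilated by $\frac{\p^n}{\p\z^n}$) has the form $w = w_0 - \sum_{k=1}^n \frac{(-1)^k}{k!}\z^k \frac{\p^{k-1}f}{\p\z^{k-1}}$ with $w_0 \in H(D)$. This is the heart of the argument: it converts the problem of solving a $\dbar$-equation with a possibly non-integrable right-hand side into the problem of choosing a single holomorphic function $w_0$ so that the boundary and normalization conditions hold. I would substitute the representation of $f$ from Theorem \ref{higherschwarz}, and the representations of its derivatives $\frac{\p^{k-1}f}{\p\z^{k-1}}$, into this formula. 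The boundary condition on $w$ has been set up precisely so that the real part of the correction term $-\sum_{k=1}^n \frac{(-1)^k}{k!}e^{-ik(\cdot)}\bigl(\frac{\p^{k-1}f}{\p\z^{k-1}}\bigr)_b$ is subtracted off, which means the holomorphic part $w_0$ is forced to satisfy exactly the \emph{first-order} Schwarz problem $\re\{(w_0)_b\} = \re\{h_b\}$, $\im\{w_0(0)\} = c$. That first-order problem is solved by the first displayed theorem of Section \ref{SchwarzSection}, giving $w_0 = ic - I + \frac{1}{2\pi}\langle h_b, P_r(\theta - \cdot)\rangle$.

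The remaining work is bookkeeping: I would expand each factor $\z^k \frac{\p^{k-1}f}{\p\z^{k-1}}$ using the closed form for $f$, differentiate the series representation term by term, and collect the resulting sums over the double index $(k,\ell)$. The $c_\ell$-terms produce the polynomial contributions $\z^k(z+\z)^{\ell-(k-1)}$, and the $h_\ell$-terms produce the pairings against $(P_r + iQ_r)(e^{i(\cdot)} - re^{i\theta}+\overline{e^{i(\cdot)} - re^{i\theta}})^{\ell-(k-1)}$, matching the claimed formula after reindexing. The $\z$ multiplying the $(h_0)_b$ and $\widetilde{I}$ block comes from the $k=1$ term acting on the zero-order part of $f$. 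I expect the main obstacle to be verifying the derivative computation of the series for $f$ legitimately, namely that differentiating $\frac{\p^{k-1}}{\p\z^{k-1}}$ of the operator $\mathcal{T}_D^n$-type expression commutes with the distributional pairing and that the factors $(e^{i(\cdot)} - re^{i\theta}+\overline{e^{i(\cdot)} - re^{i\theta}})$ transform correctly under the $\z$-derivatives; the integrability hypothesis $\frac{1}{2} < p_k \leq 1$ (via Lemma 1.8.3 of \cite{KlimBook} and Theorem 6.2 of \cite{GHJH2}, as in Theorem \ref{higherschwarz}) is exactly what guarantees these manipulations and the application of Proposition \ref{altrepnohardy} are justified, so I would be careful to track that constraint through each differentiation.
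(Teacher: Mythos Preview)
Your proposal is correct and follows essentially the same route as the paper: invoke Theorem \ref{higherschwarz} for the explicit form of $f$, apply Proposition \ref{altrepnohardy} to write $w = w_0 - \sum_{k=1}^n \frac{(-1)^k}{k!}\z^k \frac{\p^{k-1}f}{\p\z^{k-1}}$, choose the holomorphic piece $w_0 = ic - I + \frac{1}{2\pi}\langle h_b, P_r(\theta - \cdot)\rangle$, and then verify the boundary and normalization conditions by direct substitution. The only cosmetic difference is that the paper computes the derivatives $\frac{\p^{k}f}{\p\z^{k}}$ first and then selects $w_0$, whereas you identify $w_0$ first by reading off the reduced first-order Schwarz problem; the content is the same.
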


\begin{proof}
The existence of an $f$ that satisfies the hypothesis here is the result of Theorem \ref{higherschwarz}, and from that theorem, we know it has the form 
\begin{align*}
f(z) &=  i\sum_{\ell = 1}^{n-1}\frac{c_\ell}{\ell!}(z+\z)^\ell  - \widetilde{I} + \frac{1}{2\pi} \langle (h_0)_b, P_r(\theta - \cdot)\rangle \\
&\quad+ \sum_{j=1}^{n-1}\frac{(-1)^j}{2\pi   j!} \langle h_j, (P_r(\theta - \cdot)+iQ_r(\theta - \cdot))(e^{i(\cdot)} - re^{i\theta}+\overline{e^{i(\cdot)} - re^{i\theta}})^j\rangle .
\end{align*}
By Proposition \ref{altrepnohardy}, we know that a $w$ which solves 
\[
\frac{\p w}{\p\z} = f
\]
has the form 
\begin{equation}\label{wrepwithderivs}
w(z) = w_0 - \sum_{k=1}^n \frac{(-1)^k}{k!}\z^k \frac{\p^{k-1}f}{\p\z^{k-1}}(z),
\end{equation}
where $w_0 \in H(D)$. Since we have an explicit representation for $f$ here, we can find its partial derivatives by computation. Observe that 
\begin{align*}
\frac{\p^k f}{\p\z^k} 
&= i\sum_{\ell = k}^{n-1} \frac{c_\ell}{(\ell-k)!}(z+\z)^{\ell-k}  + F(re^{i\theta},\ell) \\
&\quad+ \sum_{j=k}^{n-1}\frac{(-1)^j}{2\pi   (j-k)!} \langle h_j, (P_r(\theta - \cdot)+iQ_r(\theta - \cdot))(e^{i(\cdot)} - re^{i\theta}+\overline{e^{i(\cdot)} - re^{i\theta}})^{j-k}\rangle,
\end{align*}
where
   \begin{numcases}{F(re^{i\theta}, \ell) =  }
             - \widetilde{I} + \frac{1}{2\pi} \langle (h_0)_b, P_r(\theta - \cdot)\rangle, & $\ell = 0$, \nonumber\\
             0, & otherwise \nonumber.
    \end{numcases}
After substituting into equation (\ref{wrepwithderivs}), $w$ has the form 
\begin{align*}
&w(z)  \\
& = w_0(z)
- \sum_{k=1}^n \frac{(-1)^k}{k!} \z^k \left[ i\sum_{\ell = k-1}^{n-1}\frac{c_\ell}{(\ell - (k-1))!}(z+\z)^{\ell - (k-1)}  + F(re^{i\theta}, \ell) \right] \\
&\quad  - \sum_{k=1}^n \frac{(-1)^k}{k!} \z^k\left[\sum_{\ell = k - 1}^{n-1}\frac{(-1)^{\ell-(k-1)}}{2\pi (\ell-(k-1))!} \langle h_\ell, (P_r(\theta - \cdot)+iQ_r(\theta - \cdot))(e^{i(\cdot)} - re^{i\theta}+\overline{e^{i(\cdot)} - re^{i\theta}})^{\ell-(k-1)}\rangle \right],
\end{align*}
where $w_0$ is an arbitrary holomorphic function. We choose $w_0$ to be the function 
\[
w_0(re^{i\theta}) := ic  - I + \frac{1}{2\pi } \langle h_b, P_r(\theta - \cdot)\rangle . 
\]
Now, $w$ has the form
\begin{align*}
&w(z) \\
& = ic  - I + \frac{1}{2\pi } \langle h_b, P_r(\theta - \cdot)\rangle - \z\left( - \widetilde{I} + \frac{1}{2\pi} \langle (h_0)_b, P_r(\theta - \cdot)\rangle\right) \\
&\quad - \sum_{k=1}^n \frac{(-1)^k}{k!} \z^k \left[ i\sum_{\ell = k-1}^{n-1}\frac{c_\ell}{(\ell - (k-1))!}(z+\z)^{\ell - (k-1)}  \right] \\
&\quad  - \sum_{k=1}^n \frac{(-1)^k}{k!} \z^k\left[\sum_{\ell = k - 1}^{n-1}\frac{(-1)^{\ell-(k-1)}}{2\pi (\ell-(k-1))!} \langle h_\ell, (P_r(\theta - \cdot)+iQ_r(\theta - \cdot))(e^{i(\cdot)} - re^{i\theta}+\overline{e^{i(\cdot)} - re^{i\theta}})^{\ell-(k-1)}\rangle \right].
\end{align*}
By construction, this $w$ solves 
\[
\frac{\p w}{\p\z} = f.
\]
Now, observe that 
\begin{align*}
\re\{w_b\} &= \re\left\{ \left( ic  - I + \frac{1}{2\pi } \langle h_b, P_r(\theta - \cdot)\rangle  - \sum_{k=1}^n \frac{(-1)^k}{k!}\z^k \frac{\p^{k-1}f}{\p\z^{k-1}} \right) \right\} \\
&= \re\left\{ h_b  - \sum_{k=1}^n \frac{(-1)^k}{k!}e^{-ik(\cdot)} \left(\frac{\p^{k-1}f}{\p\z^{k-1}} \right)_b \right\}.
\end{align*}
Finally, observe that 
\begin{align*}
\im\{w(0)\} = \im\{ic - \frac{i}{2\pi}\langle \im h_b, 1\rangle + \frac{1}{2\pi}\langle  h_b, 1\rangle\} = c.
\end{align*}
\end{proof}

\begin{corr}
The constructed solution $w$ from Theorem \ref{schwarznoint} also solves the higher-order Schwarz boundary value problem
\begin{numcases}{}
\frac{\p^{n+1} w}{\p\z^{n+1}} = 0 \nonumber \\
\re\left\{\left(\frac{\p^{k+1} w}{\p\z^{k+1}}\right)_b\right\} = h_{k} , \quad 1 \leq k \leq n -1 \nonumber \\
\re\left\{\left(\frac{\p w}{\p\z}\right)_b\right\} = \re\{(h_0)_b\}  \nonumber \\
\re\{w_b\} = \re\left\{h_b  - \sum_{k=1}^n \frac{(-1)^k}{k!}e^{-ik(\cdot)} \left(\frac{\p^{k-1}f}{\p\z^{k-1}} \right)_b \right\} \nonumber \\
\im\left\{\frac{\p^{k+1} w}{\p\z^{k+1}}(0)\right\} = c_k, \quad 0 \leq k \leq n -1 , \nonumber \\
\im\{w(0)\} = c. \nonumber
\end{numcases}
\end{corr}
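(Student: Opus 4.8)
The plan is to verify the six conditions of the stated problem one at a time, using the single structural fact that the $w$ built in Theorem \ref{schwarznoint} satisfies $\frac{\p w}{\p\z} = f$ as an identity of functions on $D$. Differentiating this identity $k$ times in $\z$ gives
\[
\frac{\p^{k+1} w}{\p\z^{k+1}} = \frac{\p^k f}{\p\z^k}, \qquad k \geq 0,
\]
so that every condition in the corollary involving a derivative of $w$ of order at least one becomes a condition on a derivative of $f$, for which the hypotheses of Theorem \ref{schwarznoint} (the $n$-th order Schwarz problem that $f$ solves) already supply the answer.

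Concretely, I would argue as follows. Taking $k = n$ in the displayed identity gives $\frac{\p^{n+1} w}{\p\z^{n+1}} = \frac{\p^n f}{\p\z^n} = 0$, which is the first equation. For $1 \leq k \leq n-1$, the identity yields $\left(\frac{\p^{k+1} w}{\p\z^{k+1}}\right)_b = \left(\frac{\p^k f}{\p\z^k}\right)_b$, so the real-part boundary conditions reduce to $\re\left\{\left(\frac{\p^k f}{\p\z^k}\right)_b\right\} = h_k$, which holds by hypothesis on $f$. Setting $k = 0$ gives $\left(\frac{\p w}{\p\z}\right)_b = f_b$, so the condition $\re\left\{\left(\frac{\p w}{\p\z}\right)_b\right\} = \re\{(h_0)_b\}$ is exactly the zero-order condition $\re\{f_b\} = \re\{(h_0)_b\}$ on $f$. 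The condition on $\re\{w_b\}$ is literally the boundary condition that Theorem \ref{schwarznoint} establishes for $w$, so nothing new is required. The pointwise imaginary-part conditions follow the same way: $\frac{\p^{k+1} w}{\p\z^{k+1}}(0) = \frac{\p^k f}{\p\z^k}(0)$ gives $\im\left\{\frac{\p^{k+1} w}{\p\z^{k+1}}(0)\right\} = c_k$ for $0 \leq k \leq n-1$ from the hypothesis on $f$, while $\im\{w(0)\} = c$ is again provided directly by Theorem \ref{schwarznoint}.

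The only point to note—rather than a genuine obstacle—is that the reduction relies on $\frac{\p^{k+1} w}{\p\z^{k+1}}$ and $\frac{\p^k f}{\p\z^k}$ being the \emph{same} function on $D$, not merely two functions sharing boundary data; this is immediate from differentiating the interior identity $\frac{\p w}{\p\z} = f$ established in Theorem \ref{schwarznoint}, and consequently their boundary values in the sense of Definition \ref{bvcircle} coincide automatically. The existence of all the distributional boundary values invoked is already part of the conclusion of Theorem \ref{higherschwarz} for $f$ and its $\z$-derivatives, and of Theorem \ref{schwarznoint} for $w$. Hence the corollary is entirely a bookkeeping consequence of the two preceding results, requiring no new estimates.
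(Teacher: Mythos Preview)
Your argument is correct and is precisely the reasoning the paper intends: the corollary is stated without proof, as an immediate consequence of combining $\frac{\p w}{\p\z}=f$ from Theorem \ref{schwarznoint} with the $n$th-order Schwarz data satisfied by $f$, exactly via the identification $\frac{\p^{k+1} w}{\p\z^{k+1}}=\frac{\p^{k} f}{\p\z^{k}}$ that you write out. There is nothing to add.
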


\begin{comm}
While the corollary is a Schwarz boundary value problem corresponding to a homogeneous Cauchy-Riemann equation (and 0 is certainly integrable), note that the boundary condition for the first derivative is in terms of a boundary value in the sense of distributions of a holomorphic function which is not necessarily a member of a $H^p(D)$ space. This situation has not been handled by any of the other solved cases in the literature. On the other hand, if in the hypothesis of Theorem \ref{schwarznoint} we choose $(h_0)_b$ to be a boundary value in the sense of distributions of a function in a holomorphic Hardy space $H^{p_0}(D)$, $0 < p_0 \leq 1$, then $f$ is a member of the space $H^{n,p}_0(D)$, where $p:=\min_{0\leq k \leq n-1}\{p_k\}$, which are studied in \cite{WB}. If we choose $h_b$ to be the boundary value in the sense of distributions of a function in a holomorphic Hardy space $H^q(D)$, $0 < q \leq 1$, then $w$ is an element of the spaces $H^{1,\tilde{p}}_f(D)$ and $H^{n+1,\tilde{p}}_0(D)$, where $\tilde{p}:= \min\{q,p\}$, see \cite{WB} for more details about these spaces. 
\end{comm}

We now further generalize Theorem \ref{schwarznoint}. By iterating the construction used in the last theorem, we construct solutions to a Schwarz problem that does not appeal to Hardy space theory for any order.  

\begin{theorem}
For $n$ a positive integer, the Schwarz problem 
    \begin{numcases}{}\label{hos}
        \frac{\p^n f_n}{\p\z^n} = 0 \nonumber\\
        \re\left\{ \left( \frac{\p^k f_n}{\p\z^k}\right)_b \right\} = \re\left\{ (f_{n-1-k})_b\right\} =  \re\left\{ (h_{n-k})_b - \sum_{\ell = 1}^{n-k-1} \frac{(-1)^\ell}{\ell!} e^{-i\ell(\cdot)} (f_{n-\ell})_b\right\} \\
        \im\left\{ \frac{\p^k f_n}{\p\z^k}(0) \right\} = c_{n-1-k} \nonumber
    \end{numcases}
where each $f_k$ solves the Schwarz problem
    \begin{numcases}{}\label{fos}
        \frac{\p f_k}{\p\z} = f_{k-1} \nonumber\\
        \re\left\{ (f_k)_b\right\} = \re\left\{ (h_{k-1})_b - \sum_{\ell = 1}^{k-1} \frac{(-1)^\ell}{\ell!} e^{-i\ell(\cdot)} (f_{k-\ell})_b \right\} \\
        \im\left\{ f_k(0)\right\} =  c_{k-1} \nonumber
    \end{numcases}
with $h_{k-1} \in H_b$, and $c_{k-1} \in \R$,
for each $k$ with $1 \leq k \leq n$, is solved by 
\begin{align*}
 f_n(z) = ic_{n-1} - I_{n-1} + \frac{1}{2\pi} \langle (h_{n-1})_b, P_r(\theta - \cdot) \rangle - \sum_{\ell = 1}^{n-1} \frac{(-1)^\ell}{\ell!} \z^\ell f_{n-\ell}(z), 
\end{align*}
where each $f_k$ is described by 
\[
f_k(z) = ic_{k-1} - I_{k-1} + \frac{1}{2\pi} \langle (h_{k-1})_b, P_r(\theta - \cdot) \rangle - \sum_{\ell = 1}^{k-1} \frac{(-1)^\ell}{\ell!} \z^\ell f_{k-\ell}(z),
\]
with
\[
I_{k-1} := \frac{i}{2\pi} \langle \im\{(h_{k-1})_b\}, 1\rangle,
\] 
for $1 \leq k \leq n$, and $f_0 \equiv 0$.
\end{theorem}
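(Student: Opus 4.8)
The plan is to induct on $k$, proving that for each $k$ with $1\le k\le n$ the function
\[
f_k(z) = ic_{k-1} - I_{k-1} + \frac{1}{2\pi}\langle (h_{k-1})_b, P_r(\theta-\cdot)\rangle - \sum_{\ell=1}^{k-1}\frac{(-1)^\ell}{\ell!}\z^\ell f_{k-\ell}(z)
\]
is well-defined, admits a boundary value in the sense of distributions, and solves the first-order problem with data $f_{k-1}$, $h_{k-1}$, $c_{k-1}$; the assertions about $f_n$ and about the $n$-th order problem then drop out at the end. For the base case $k=1$ the sum is empty, so $f_1(z) = ic_0 - I_0 + \frac{1}{2\pi}\langle (h_0)_b, P_r(\theta-\cdot)\rangle$, and since $f_0\equiv 0$ this is exactly the first-order Schwarz problem solved at the beginning of Section \ref{SchwarzSection} with boundary data $h_0$ and constant $c_0$, which settles the base case.

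For the inductive step I would assume that $f_1,\dots,f_{k-1}$ have been built and solve their problems; in particular $\frac{\p f_j}{\p\z}=f_{j-1}$ for $j<k$, so iterating gives $\frac{\p^{\ell-1} f_{k-1}}{\p\z^{\ell-1}}=f_{k-\ell}$ and, using $f_0\equiv 0$, the homogeneous identity $\frac{\p^{k-1} f_{k-1}}{\p\z^{k-1}}=0$. By Theorem 3.1 of \cite{GHJH2} the term $\frac{1}{2\pi}\langle (h_{k-1})_b, P_r(\theta-\cdot)\rangle$ equals the holomorphic function on $D$ whose distributional boundary value is $(h_{k-1})_b$, so
\[
w_0(z):= ic_{k-1} - I_{k-1} + \frac{1}{2\pi}\langle (h_{k-1})_b, P_r(\theta-\cdot)\rangle
\]
is holomorphic. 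The displayed formula for $f_k$ is then precisely the representation of a solution of $\frac{\p f_k}{\p\z}=f_{k-1}$ furnished by Proposition \ref{altrepnohardy} with free term $w_0$ (since $\frac{\p^{\ell-1}f_{k-1}}{\p\z^{\ell-1}}=f_{k-\ell}$), which verifies the interior equation. One may equally check $\frac{\p f_k}{\p\z}=f_{k-1}$ directly, differentiating term by term, applying the product rule to each $\z^\ell f_{k-\ell}$, substituting $\frac{\p f_{k-\ell}}{\p\z}=f_{k-\ell-1}$, and reindexing so that the two resulting sums telescope down to $f_{k-1}$.

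Next I would verify the two remaining conditions. Taking the distributional boundary value of $f_k$, the one point needing care is the non-holomorphic terms: since $\z^\ell$ restricts to the smooth function $e^{-i\ell\theta}$ on $\p D$ and $r^\ell\to 1$, Definition \ref{bvcircle} gives $(\z^\ell f_{k-\ell})_b = e^{-i\ell(\cdot)}(f_{k-\ell})_b$ by testing against $e^{-i\ell(\cdot)}\varphi\in C^\infty(\p D)$. As $ic_{k-1}$ and $I_{k-1}$ are purely imaginary, passing to real parts yields $\re\{(f_k)_b\} = \re\{(h_{k-1})_b - \sum_{\ell=1}^{k-1}\frac{(-1)^\ell}{\ell!}e^{-i\ell(\cdot)}(f_{k-\ell})_b\}$, the prescribed boundary condition. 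Evaluating at $z=0$ kills every $\ell\ge 1$ term and uses $P_0(\theta-\cdot)\equiv 1$ to leave $f_k(0)=ic_{k-1}-I_{k-1}+\frac{1}{2\pi}\langle (h_{k-1})_b,1\rangle$; here $-I_{k-1}$ exactly cancels the imaginary part of $\frac{1}{2\pi}\langle (h_{k-1})_b,1\rangle$, so $\im\{f_k(0)\}=c_{k-1}$, completing the induction.

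Finally, because $\frac{\p f_j}{\p\z}=f_{j-1}$ holds at every level, iteration gives $\frac{\p^k f_n}{\p\z^k}=f_{n-k}$; thus $\frac{\p^n f_n}{\p\z^n}=f_0\equiv 0$, and each real-part boundary condition and each imaginary-part point condition of the $n$-th order problem is read off from the corresponding condition already established for $f_{n-k}$. I expect the main obstacle to be bookkeeping rather than any deep estimate: one must confirm that each $f_k$ genuinely carries a distributional boundary value so the recursion is meaningful at the next stage, handle the telescoping of indices in the verification of $\frac{\p f_k}{\p\z}=f_{k-1}$, and correctly align the shifted indices of the $n$-th order problem with the first-order data through the identity $\frac{\p^k f_n}{\p\z^k}=f_{n-k}$.
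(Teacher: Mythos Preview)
Your proposal is correct and follows essentially the same route as the paper: induction on the index, with the base case handled by the first-order Schwarz result and the inductive step verifying the interior equation (via telescoping of the product-rule terms), the boundary condition, and the point condition at $0$, after which the $n$th-order statement is read off from $\frac{\p^k f_n}{\p\z^k}=f_{n-k}$. Your appeal to Proposition \ref{altrepnohardy} as an alternative to the telescoping computation, and your explicit check that $(\z^\ell f_{k-\ell})_b = e^{-i\ell(\cdot)}(f_{k-\ell})_b$, are welcome elaborations the paper leaves implicit.
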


\begin{proof}
We work by induction. 

For $n = 1$, observe that 
\[
f_1(z) = ic_0 - I_0 + \frac{1}{2\pi} \langle (h_0)_b, P_r(\theta - \cdot) \rangle
\]
is holomorphic, by Theorem 3.1 of \cite{GHJH2}, 
\[
\re\{(f_1)_b\} = \re\{ic_0 - I_0 + (h_0)_b\} = \re\{(h_0)_b\},
\]
and 
\[
\im{f_1(0)} = \im\left\{ic_0 - \frac{i}{2\pi} \langle \im\{(h_0)_b\}, 1\rangle + \frac{1}{2\pi} \langle (h_0)_b,1 \rangle\right\} = c_0.
\]

Suppose the theorem holds for all $n$ such that $1\leq n \leq m-1$. Consider $f_m$ defined by 
\[
f_m(z) = ic_{m-1} - I_{m-1} + \frac{1}{2\pi} \langle (h_{m-1})_b, P_r(\theta - \cdot) \rangle - \sum_{\ell = 1}^{m-1} \frac{(-1)^\ell}{\ell!} \z^\ell f_{m-\ell}(z).
\]
Observe that 
\begin{align*}
\frac{\p f_m}{\p\z} &= \frac{\p}{\p\z}\left(ic_{m-1} - I_{m-1} + \frac{1}{2\pi} \langle (h_{m-1})_b, P_r(\theta - \cdot) \rangle - \sum_{\ell = 1}^{m-1} \frac{(-1)^\ell}{\ell!} \z^\ell f_{m-\ell}\right) \\
&= - \sum_{\ell = 1}^{m-1} \frac{(-1)^\ell}{\ell!} \left[\ell\z^{\ell-1} f_{m-\ell} + \z^\ell f_{m-\ell - 1}\right],
\end{align*}
as 
\[
\frac{\p f_{k}}{\p\z} = f_{k-1},
\]
for all $1\leq k \leq m-1$. So, 
\begin{align*}
 &- \sum_{\ell = 1}^{m-1} \frac{(-1)^\ell}{\ell!} \left[\ell\z^{\ell-1} f_{m-\ell} + \z^\ell f_{m-\ell - 1}\right] \\
 &=  - \left(\sum_{\ell = 1}^{m-1} \frac{(-1)^\ell}{\ell!} \ell\z^{\ell-1} f_{m-\ell} + \sum_{\ell = 1}^{m-1} \frac{(-1)^\ell}{\ell!}\z^\ell f_{m-\ell - 1}  \right)\\
 &= - \left(-f_{m-1} + \sum_{\ell = 2}^{m-1} \frac{(-1)^\ell}{(\ell-1)!} \z^{\ell-1} f_{m-\ell} + \sum_{\ell = 1}^{m-1} \frac{(-1)^\ell}{\ell!}\z^\ell f_{m-\ell - 1}  \right).
\end{align*}
Since 
\[
\frac{\p f_1}{\p\z} = f_0 \equiv 0
\]
it follows that 
\begin{align*}
& - \left(-f_{m-1} + \sum_{\ell = 2}^{m-1} \frac{(-1)^\ell}{(\ell-1)!} \z^{\ell-1} f_{m-\ell} + \sum_{\ell = 1}^{m-1} \frac{(-1)^\ell}{\ell!}\z^\ell f_{m-\ell - 1}  \right) \\
&=  - \left(-f_{m-1} + \sum_{\ell = 2}^{m-1} \frac{(-1)^\ell}{(\ell-1)!} \z^{\ell-1} f_{m-\ell} + \sum_{\ell = 1}^{m-2} \frac{(-1)^\ell}{\ell!}\z^\ell f_{m-\ell - 1}  \right)\\
&=  - \left(-f_{m-1} + \sum_{\ell = 1}^{m-2} \frac{(-1)^{\ell+1}}{\ell!} \z^{\ell} f_{m-(\ell+1)} + \sum_{\ell = 1}^{m-2} \frac{(-1)^\ell}{\ell!}\z^\ell f_{m-\ell - 1}  \right) \\
&= f_{m-1}.
\end{align*}
Observe that, by construction, we have
\begin{align*}
\re\{(f_m)_b\} &= \re\left\{( ic_{m-1} - I_{m-1} + \frac{1}{2\pi} \langle (h_{m-1})_b, P_r(\theta - \cdot) \rangle - \sum_{\ell = 1}^{m-1} \frac{(-1)^\ell}{\ell!} \z^\ell f_{m-\ell})_b\right\} \\
&= \re\left\{ (h_{m-1})_b - \sum_{\ell = 1}^{m-1} \frac{(-1)^\ell}{\ell!} e^{-i\ell(\cdot)} (f_{m-\ell})_b\right\}
\end{align*}
and 
\begin{align*}
\im\{f_m(0)\} &= \im\left\{ic_{m-1} -  \frac{1}{2\pi} \langle (\im\{h_{m-1})_b\}, 1 \rangle + \frac{1}{2\pi} \langle (h_{m-1})_b, 1 \rangle \right\} = c_{m-1}.
\end{align*}
Thus, $f_m$ satisfies the conditions of the Schwarz problem (\ref{fos}). 

Now, since
\[
\frac{\p^{m-1}f_{m-1}}{\p\z^{m-1}} = 0, 
\]
it follows that 
\[
\frac{\p^{m}f_{m}}{\p\z^{m}} = \frac{\p^{m-1}}{\p\z^{m-1}}\left(\frac{\p f_m}{\p\z}\right) = \frac{\p^{m-1}}{\p\z^{m-1}}\left(f_{m-1}\right) =  0.
\]
The induction hypothesis confirms the boundary conditions and the pointwise conditions for every $\frac{\p^k f_m}{\p\z^k}$ except for $k = 0$, which is the case we have already confirmed. Therefore, $f_m$ satifies the conditions of the Schwarz problem (\ref{hos}).

\end{proof}

\begin{comm}
This extension of Theorem \ref{schwarznoint} and its corollary is interesting because it gives a constructive solution of a homogeneous Schwarz boundary value problem of arbitrary order where every boundary condition is exclusively in terms of boundary values in the sense of distributions of holomorphic functions, with no a priori need for those holomorphic functions to be members of a holomorphic Hardy space. However, if the $h_k$ are chosen to be in Hardy spaces $H^{p_k}(D)$, then the solutions $f_n$ will be elements of the classes $H^{1,p}_{f_{n-1}}(D)$ and $H^{n,p}_0(D)$, where $p:= \min_{0\leq k\leq n-1}\{p_k\}$, see \cite{WB} for more details of these classes of functions. 
\end{comm}

\section{The case of the upper half-plane}

\subsection{Definitions and Results}

We will let $\Cplus$ be the half-plane of complex numbers with positive imaginary part, $\Cminus$ the half-plane of complex numbers with negative imaginary part, $H(\Cplus)$ be the space of holomorphic functions on $\Cplus$, $H^p(\Cplus)$ be the classical Hardy spaces of holomorphic functions on $\Cplus$, $C(\R)$ the space of continuous functions on $\R$, $C^\infty_c(\R)$ be the space of smooth functions on $\R$ with compact support, $C^{0,\alpha}(\C)$ the functions that are H\"older continuous with H\"older exponent $\alpha$ on $\C$, and $\mathcal{D}'(\R)$ be the space of distributions on $\R$, i.e., the bounded linear functionals on $C^\infty_c(\R)$ with pairing represented by $\langle g, \varphi \rangle$ for $g \in \mathcal{D}'(\R)$ and $\varphi \in C^\infty_c(\R)$.

Similarly to the disk case, we begin by introducing integral operators, a useful space, and associated results.

\begin{deff}
We define the space $L^{p,\nu}(\C)$ of functions $f: \C \to \C$ as those functions that satisfy $f \in L^p(D)$ and $f_\nu \in L^p(D)$, where 
\[
f_\nu(z) := \frac{1}{|z|^{\nu}}\,f\left(\frac{1}{z}\right).
\]
\end{deff}

\begin{deff}
We define the space $L^{p,\nu}(\Cplus)$ of functions $f: \Cplus \to \C$ as those functions that satisfy $f \in L^p(D\cap \Cplus)$ and $f_\nu \in L^p(D\cap\Cminus)$.
\end{deff}

\begin{deff}
For $f: \C \to \C$, we denote by $T_{\Cplus}$ the integral operator defined by
\[
T_{\Cplus}(f)(z) = -\frac{1}{\pi} \iint_{\Cplus} \frac{f(\zeta)}{\zeta - z}\, d\xi\,d\eta,
\]
whenever the integral is defined. 
\end{deff}

\begin{deff}
For $f: \C \to \C$, we denote by $T_{\C}$ the integral operator defined by
\[
T_{\C}(f)(z) = -\frac{1}{\pi} \iint_{\C} \frac{f(\zeta)}{\zeta - z}\, d\xi\,d\eta,
\]
whenever the integral is defined. 
\end{deff}

\begin{deff}
For $f: \C \to \C$, we denote by $\mathcal{T}_{\Cplus}$ the integral operator defined by
\[
\mathcal{T}_{\Cplus}(f)(z) = -\frac{1}{\pi} \iint_{\Cplus} \left( f(\zeta)\left( \frac{1}{\zeta - z} - \frac{\zeta}{\zeta^2 + 1}\right) - \overline{f(\zeta)}\left( \frac{1}{\overline{\zeta}-z} - \frac{\overline{\zeta}}{\overline{\zeta}^2 + 1}\right)\right) \,d\xi\,d\eta,
\]
whenever the integral is defined. 
\end{deff}

As in the case of $T_D$ and its associated operators, these integral operators are well-defined when acting on integrable functions and are right-inverse operators to $\frac{\p}{\p\z}$, see \cite{Vek}. 

\begin{theorem}[Theorem 1.23 \cite{Vek}]\label{regofT}
For any $f \in L^{p,2}(\C)$, $p > 2$, $T_{\C}(f) \in L^{p,2}(\C) \cap C^{0,\alpha}(\C)$, with $\alpha = \frac{p-2}{p}$.
\end{theorem}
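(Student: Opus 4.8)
The statement packages two assertions about $T_{\C}$ on the weighted class $L^{p,2}(\C)$: global H\"older continuity with exponent $\alpha = (p-2)/p$, and invariance of the class itself. The unifying observation I would exploit is that $L^{p,2}(\C)$ is built to be symmetric under the inversion $\zeta \mapsto 1/\zeta$: membership is equivalent to $f \in L^p(D)$ together with $f_\nu \in L^p(D)$, so every global statement over $\C$ splits into a statement near the origin (governed by $f$ on $D$) and a statement near infinity (governed by $f_\nu$ on $D$ after inversion). The plan is therefore to prove everything locally for the solid Cauchy transform of an $L^p(D)$ function with $p>2$, and then transfer the ``infinity'' half of each claim by a change of variables.

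For the H\"older continuity the workhorse is the classical estimate for the solid Cauchy transform: if $g \in L^p(\Omega)$ with $\Omega$ bounded and $p>2$, then for any $z_1,z_2$,
\[
|T_{\Omega}(g)(z_1) - T_{\Omega}(g)(z_2)| \le \frac{1}{\pi}\,\|g\|_{L^p(\Omega)}\, \Big\| \tfrac{1}{\cdot - z_1} - \tfrac{1}{\cdot - z_2}\Big\|_{L^{q}(\Omega)},
\]
where $q = p/(p-1) < 2$ is the conjugate exponent. I would bound the $L^q$-norm of the kernel difference by splitting $\Omega$ into the disk $B(z_1, 2|z_1-z_2|)$, on which each kernel is separately in $L^q$ because $q<2$, and its complement, on which the Lipschitz bound $|\tfrac{1}{\zeta - z_1} - \tfrac{1}{\zeta-z_2}| \le 2\,|z_1-z_2|\,|\zeta - z_1|^{-2}$ is integrable; both contributions scale like $|z_1-z_2|^{(2-q)/q} = |z_1-z_2|^{(p-2)/p}$, which is the claimed exponent $\alpha$. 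This gives H\"older continuity on every compact set. To upgrade to the behavior at infinity I would apply the inversion $z \mapsto 1/z$: substituting $\zeta = 1/\eta$ (Jacobian $|\eta|^{-4}$) and using $f(1/\eta) = |\eta|^{2} f_\nu(\eta)$, the value of $T_{\C}f$ near infinity becomes a Cauchy-type integral against $f_\nu \in L^p(D)$, so the estimate near infinity reduces to the one already proved near the origin.

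For the class invariance, the membership $T_{\C}(f) \in L^p(D)$ is immediate once H\"older continuity is known, since a H\"older continuous function is bounded on $\overline{D}$. The real content is $(T_{\C} f)_\nu \in L^p(D)$, i.e.\ control of $T_{\C} f$ at infinity. Here I would again invoke the inversion identity to express $(T_{\C} f)_\nu(w)$, for $w$ near the origin, as a Cauchy-type integral of $f_\nu$; the weight $\nu = 2$ is exactly what makes the inversion Jacobian $|\eta|^{-4}$ and the substitution $f(1/\eta) = |\eta|^{2} f_\nu(\eta)$ combine to return a balanced kernel acting on $f_\nu \in L^p(D)$ rather than an unbalanced power of $\eta$. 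Since $f_\nu \in L^p(D)$ with $p>2$, the same local H\"older estimate then shows $(T_{\C} f)_\nu$ is bounded near the origin, which together with boundedness away from the origin on $\overline{D}$ yields $(T_{\C} f)_\nu \in L^p(D)$.

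The main obstacle is the behavior at infinity, and specifically verifying that the inversion really does preserve the structure of the operator. The delicate points are: (i) the convergence of the defining integral of $T_{\C} f$ at infinity, which need not be absolute since $\iint_{\C} f$ itself may fail to converge, so the decay of $f$ encoded in $f_\nu \in L^p(D)$ must be used carefully in place of a naive expansion of the kernel; and (ii) tracking the exact powers of $\eta$ and $\bar\eta$ produced by the change of variables $\zeta = 1/\eta$, where the choice $\nu = 2$ is precisely what makes the bookkeeping close up and keeps the transformed kernel of Cauchy type. Once the inversion identity is established cleanly, both the H\"older bound and the weighted integrability at infinity follow from their already-proven local counterparts.
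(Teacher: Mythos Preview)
The paper does not prove this statement; it is quoted from Vekua's monograph and invoked later only through the H\"older-continuity conclusion. Your outline for the $C^{0,\alpha}$ part is the classical argument and is correct: H\"older's inequality reduces the oscillation of $T_{\C}f$ to the $L^{q}$-norm of a difference of Cauchy kernels, and the near/far splitting produces the exponent $(p-2)/p$. This is essentially how Vekua proceeds.

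The gap is in your treatment of the $L^{p,2}$-invariance. Your claim that under inversion ``the weight $\nu=2$ is exactly what makes the Jacobian and the substitution combine to return a balanced kernel'' is not borne out by the algebra. Carrying out $\zeta=1/\eta$ with $d\xi\,d\eta=|\eta|^{-4}\,d\sigma\,d\rho$, $f(1/\eta)=|\eta|^{2}f_{2}(\eta)$, and $(\zeta-z)^{-1}=\eta w/(w-\eta)$ for $z=1/w$, one obtains
\[
(T_{\C}f)_{2}(w)=|w|^{-2}\,T_{\C}f(1/w)=-\frac{1}{\pi\,\bar w}\iint_{\C}\frac{f_{2}(\eta)}{\bar\eta\,(w-\eta)}\,d\sigma\,d\rho,
\]
so the transformed integrand carries an extra factor $1/\bar\eta$ and the whole expression a prefactor $1/\bar w$; this is not a Cauchy-type integral of $f_{2}$, and $(T_{\C}f)_{2}$ need not be bounded near $w=0$. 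Concretely, take $f\equiv 0$ on $D$ and $f(\zeta)=|\zeta|^{-2}$ for $|\zeta|>1$; then $f_{2}\equiv 1$ on $D$ and $f\in L^{p,2}(\C)$, while a polar-coordinate computation gives $T_{\C}f(z)=2z^{-1}\log|z|$ for $|z|>1$, hence $(T_{\C}f)_{2}(w)=2\bar w^{\,-1}\log(1/|w|)$, which lies in no $L^{p}(D)$ with $p>2$. So the $L^{p,2}$ assertion, read literally with the paper's definitions of $T_{\C}$ and $L^{p,2}(\C)$, cannot be obtained by your argument and in fact fails; Vekua's original formulation uses a modified kernel (subtracting $1/\zeta$) or a different normalization of the space to avoid exactly this logarithmic loss. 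The H\"older conclusion, which is all the paper actually uses downstream, is unaffected.
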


By Theorem 17 in \cite{Gaertner}, the Schwarz boundary value problem 
\begin{numcases}{}
\frac{\p w}{\p\z} = f \nonumber\\
\re{w|_{\R}} = h \nonumber\\
\im{w(i)} = c , \nonumber
\end{numcases}
for $f \in L^{p,2}(\Cplus)$, real-valued and bounded $h \in C(\R)$, and $c \in \mathbb{R}$, is uniquely solved by the function 
\begin{align*}
w(z) &= ic + \frac{1}{\pi i} \int_{-\infty}^\infty h(t)\left( \frac{1}{t-z} - \frac{t}{t^2+1} \right)\,dt\\
&\quad\quad -\frac{1}{\pi} \iint_{\Cplus} \left( f(\zeta)\left( \frac{1}{\zeta - z} - \frac{\zeta}{\zeta^2 + 1}\right) - \overline{f(\zeta)}\left( \frac{1}{\overline{\zeta}-z} - \frac{\overline{\zeta}}{\overline{\zeta}^2 + 1}\right)\right) \,d\xi\,d\eta.
\end{align*}

Note that the first integral is a holomorphic function with real part that satisfies the boundary condition and with imaginary part that is zero when $z = i$, and the second integral is a particular solution to the nonhomogeneous Cauchy-Riemann equation with vanishing real part on the boundary and imaginary part that is zero when $z = i$, see \cite{Gaertner}. 

The operator $\mathcal{T}_{\Cplus}(\cdot)$ can be iterated when appropriate to solve higher-order nonhomoegeneous Cauchy-Riemann equations of the form $\frac{\p^n w}{\p\z^n} = f$ for $n$ a positive integer greater than one. We collect some results concerning $\mathcal{T}_{\Cplus}^n(\cdot)$ here. 

\begin{theorem}[Definition 2.1, Theorem 2.2  \cite{higherupper}]\label{higherupperth1}
For $f \in L^{p,2}(\Cplus)$, the operator $\mathcal{T}_{\Cplus}^n(f) := \underbrace{(\mathcal{T}_{\Cplus}\circ {\mathcal{T}_{\Cplus}}\circ \cdots \circ {\mathcal{T}_{\Cplus}})}_{n-\text{times}}(f) $ is well-defined and representable as 
\begin{align*}
&\mathcal{T}_{\Cplus}^n(f)(z) \\&= 
\frac{(-1)^n}{\pi (n-1)!} \iint_{\Cplus}(\zeta - z + \overline{\zeta - z})^{n-1}\left[ f(\zeta)\left(\frac{1}{\zeta - z} - \frac{\zeta}{\zeta^2+1}\right) - \overline{f(\zeta)}\left(\frac{1}{\overline{\zeta} - z} - \frac{\overline{\zeta}}{\overline{\zeta}^2+1}\right)\right]\,d\xi\,d\eta.
\end{align*}
\end{theorem}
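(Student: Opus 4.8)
The plan is to argue by induction on $n$, treating the asserted formula as the definition of a candidate operator $G_n(f)$ and identifying it with $\mathcal{T}_{\Cplus}^n(f)$ through the uniqueness built into Gaertner's solution of the Schwarz problem. For $n=1$ the factor $(\zeta - z + \overline{\zeta - z})^{0} = 1$ and the constant is $\frac{(-1)^1}{\pi\,0!} = -\frac1\pi$, so $G_1(f)$ is literally the defining integral for $\mathcal{T}_{\Cplus}(f)$ and there is nothing to prove. For the inductive step I would assume $\mathcal{T}_{\Cplus}^{n-1}(f) = G_{n-1}(f)$, recall that $\mathcal{T}_{\Cplus}^n(f) = \mathcal{T}_{\Cplus}\bigl(\mathcal{T}_{\Cplus}^{n-1}(f)\bigr)$ by definition, and recall from Theorem 17 of \cite{Gaertner} that $\mathcal{T}_{\Cplus}(g)$ is the \emph{unique} solution of the Schwarz problem $\frac{\p w}{\p\z} = g$, $\re\{w|_{\R}\} = 0$, $\im\{w(i)\} = 0$. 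Hence it suffices to show that the candidate $G_n(f)$ satisfies these three conditions with source term $g = G_{n-1}(f)$; uniqueness then forces $G_n(f) = \mathcal{T}_{\Cplus}^n(f)$.

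The three verifications are where the structure of the kernel does the work. For the equation $\frac{\p}{\p\z}G_n(f) = G_{n-1}(f)$, I would differentiate under the integral using the product rule: since $\zeta - z + \overline{\zeta - z} = 2\xi - z - \z$, one has $\frac{\p}{\p\z}(\zeta - z + \overline{\zeta - z})^{n-1} = -(n-1)(\zeta - z + \overline{\zeta - z})^{n-2}$, and after absorbing the constant $\frac{(-1)^n(-(n-1))}{(n-1)!} = \frac{(-1)^{n-1}}{(n-2)!}$ this reproduces exactly $G_{n-1}(f)$. The distributional contribution coming from $\frac{\p}{\p\z}\frac{1}{\zeta - z} = -\pi\delta(z-\zeta)$ is annihilated by the factor $(\zeta - z + \overline{\zeta - z})^{n-1}$, which vanishes at $\zeta = z$ for $n \geq 2$ (the $\frac{1}{\overline{\zeta} - z}$ term produces no Dirac mass, since $\overline{\zeta}\in\Cminus$ while $z\in\Cplus$). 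For $\re\{G_n(f)|_{\R}\} = 0$, note that when $z = \z$ the bracket equals $A - \overline{A}$ with $A = f(\zeta)\bigl(\frac{1}{\zeta - z} - \frac{\zeta}{\zeta^2+1}\bigr)$, hence is purely imaginary, while the power $(2\xi - 2z)^{n-1}$ and the constant are real, so the integral is purely imaginary. For $\im\{G_n(f)(i)\} = 0$, I would use the algebraic identity $\frac{1}{\zeta - i} - \frac{\zeta}{\zeta^2+1} = \frac{i}{\zeta^2+1}$ (and its conjugate), which collapses the bracket at $z = i$ to $-2\im\bigl\{\frac{f(\zeta)}{\zeta^2+1}\bigr\}$, a real quantity; together with the real factor $(2\xi)^{n-1}$ this makes $G_n(f)(i)$ real.

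The main obstacle I anticipate is not these formal identities but the assertion that $\mathcal{T}_{\Cplus}^n(f)$ is \emph{well-defined}, i.e. that each intermediate iterate $\mathcal{T}_{\Cplus}^{k}(f)$ lands back in the class $L^{p,2}(\Cplus)$ on which $\mathcal{T}_{\Cplus}$ and Gaertner's uniqueness theorem operate, so that the composition is legitimate at every step. This requires a mapping property for $\mathcal{T}_{\Cplus}$ of the kind recorded for $T_{\C}$ in Theorem \ref{regofT}, controlling both the behavior near the boundary and the decay at infinity encoded in the $L^{p,\nu}$ norm. Establishing this self-improving regularity, and with it the convergence of the iterated integrals and the justification of differentiating under the integral sign around the $\frac{1}{\zeta - z}$ singularity, is the technical heart of the argument. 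As an alternative to the uniqueness route, one could prove the representation directly by substituting the formula for $\mathcal{T}_{\Cplus}^{n-1}(f)$ into $\mathcal{T}_{\Cplus}$, applying Fubini's theorem, and evaluating the resulting inner kernel integral, exactly as is done for the disk in Proposition \ref{iteratedT}; the characterization argument above has the advantage of avoiding that explicit kernel computation.
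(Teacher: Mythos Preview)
The paper does not supply a proof of this statement: it is quoted verbatim from \cite{higherupper} (as Definition~2.1 and Theorem~2.2 there), and no argument is given in the present paper. There is therefore nothing in the paper to compare your proposal against.

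That said, your outline is a reasonable way to establish the result. The three formal verifications are correct: the $\p/\p\z$ computation on the power $(\zeta - z + \overline{\zeta - z})^{n-1}$ does reproduce the $(n-1)$st iterate with the right constant, the bracket is indeed $A - \overline{A}$ (hence purely imaginary) when $z\in\R$, and your identity $\frac{1}{\zeta - i} - \frac{\zeta}{\zeta^2+1} = \frac{i}{\zeta^2+1}$ collapses the value at $z=i$ to a real number. You also correctly locate the real work in the well-definedness claim, namely that $\mathcal{T}_{\Cplus}$ maps $L^{p,2}(\Cplus)$ back into $L^{p,2}(\Cplus)$ so that the composition and Gaertner's uniqueness theorem apply at each stage; this mapping property is exactly what \cite{higherupper} supplies and is not reproduced here. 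The alternative Fubini route you mention at the end---substituting the inductive formula into $\mathcal{T}_{\Cplus}$ and evaluating the inner kernel integral, parallel to how Proposition~\ref{iteratedT} is obtained in \cite{Beg}---is the approach actually taken in the cited source, so your uniqueness-based argument is a legitimate variant rather than a reproduction of the original.
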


\begin{theorem}[Theorem 2.4 \cite{higherupper}]\label{higherupperth2}
Let $f \in L^{p,2}(\Cplus)$ and $n$ be a positive integer. Then $\mathcal{T}_{\Cplus}^n(f)$ has the following properties:
\begin{itemize}
    \item $\frac{\p^\ell}{\p\z^\ell} \mathcal{T}_{\Cplus}^n(f) = \mathcal{T}_{\Cplus}^{n-\ell}(f)$, for $0\leq \ell \leq n$, 
    \item $\re\left\{\frac{\p^\ell}{\p\z^\ell} \mathcal{T}_{\Cplus}^n(f)\big|_{\mathbb{R}}\right\} = 0$, for $0\leq \ell \leq n-1$,
    \item $\im\left\{\frac{\p^\ell}{\p\z^\ell} \mathcal{T}_{\Cplus}^n(f)(i)\right\} = 0$, for $0\leq \ell \leq n-1$.
\end{itemize}
\end{theorem}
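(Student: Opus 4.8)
The plan is to prove the first property and then deduce the other two from it. Writing $\mathcal{T}_{\Cplus}^{0}(f) := f$, I would obtain the derivative rule by induction on $\ell$ using the semigroup structure of the iterate rather than by brute force: by definition $\mathcal{T}_{\Cplus}^{n}(f) = \mathcal{T}_{\Cplus}\big(\mathcal{T}_{\Cplus}^{n-1}(f)\big)$, so applying the base right-inverse property recorded earlier for these operators (from \cite{Vek}), namely $\frac{\p}{\p\z}\mathcal{T}_{\Cplus}(g) = g$, with $g = \mathcal{T}_{\Cplus}^{n-1}(f)$, gives $\frac{\p}{\p\z}\mathcal{T}_{\Cplus}^{n}(f) = \mathcal{T}_{\Cplus}^{n-1}(f)$; iterating $\ell$ times yields $\frac{\p^\ell}{\p\z^\ell}\mathcal{T}_{\Cplus}^{n}(f) = \mathcal{T}_{\Cplus}^{n-\ell}(f)$ for $0 \leq \ell \leq n$. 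As an independent bookkeeping check I would also differentiate the closed form of Theorem \ref{higherupperth1} directly: as a function of $z$ each term in the bracket is holomorphic (both $\frac{1}{\zeta - z}$ and $\frac{1}{\overline{\zeta}-z}$ are holomorphic in $z$, and the remaining factors are independent of $z$), so $\frac{\p}{\p\z}$ annihilates the bracket and acts only on $(\zeta + \overline{\zeta} - z - \z)^{n-1}$, bringing down $-(n-1)(\zeta - z + \overline{\zeta - z})^{n-2}$; combined with the constant $\frac{(-1)^{n}}{\pi (n-1)!}$ this reproduces exactly the level-$(n-1)$ formula.

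For the second property I would use the derivative rule to reduce $\frac{\p^\ell}{\p\z^\ell}\mathcal{T}_{\Cplus}^{n}(f)$ to $\mathcal{T}_{\Cplus}^{m}(f)$ with $m = n - \ell \geq 1$, so it suffices to show $\re\{\mathcal{T}_{\Cplus}^{m}(f)(x)\} = 0$ for $x \in \R$. At $z = x$ real the factor $(\zeta - x + \overline{\zeta - x})^{m-1} = (2\re(\zeta - x))^{m-1}$ is real, and on the real axis the second half of the bracket is the complex conjugate of the first: since $\overline{\zeta - x} = \overline{\zeta} - x$ and $\overline{\zeta^2 + 1} = \overline{\zeta}^2 + 1$, one has $\overline{f(\zeta)}\big(\frac{1}{\overline{\zeta}-x} - \frac{\overline{\zeta}}{\overline{\zeta}^2+1}\big) = \overline{f(\zeta)\big(\frac{1}{\zeta - x} - \frac{\zeta}{\zeta^2+1}\big)}$. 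Hence the bracket is purely imaginary, and together with the real factor and the real constant the whole integrand is purely imaginary, so the integral is purely imaginary and its real part vanishes.

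For the third property the same reduction leaves $\im\{\mathcal{T}_{\Cplus}^{m}(f)(i)\} = 0$ with $m \geq 1$. The decisive simplification is that, using $\zeta^2 + 1 = (\zeta - i)(\zeta + i)$,
\[
\frac{1}{\zeta - i} - \frac{\zeta}{\zeta^2+1} = \frac{1}{\zeta - i}\cdot\frac{(\zeta + i) - \zeta}{\zeta + i} = \frac{i}{\zeta^2+1},
\]
and likewise $\frac{1}{\overline{\zeta} - i} - \frac{\overline{\zeta}}{\overline{\zeta}^2+1} = \frac{i}{\overline{\zeta}^2+1}$. Thus at $z = i$ the bracket collapses to $i\big(\frac{f(\zeta)}{\zeta^2+1} - \frac{\overline{f(\zeta)}}{\overline{\zeta}^2+1}\big) = i\big(\frac{f(\zeta)}{\zeta^2+1} - \overline{\tfrac{f(\zeta)}{\zeta^2+1}}\big) = -2\,\im\big\{\tfrac{f(\zeta)}{\zeta^2+1}\big\}$, which is real, while $(\zeta - i + \overline{\zeta - i})^{m-1} = (2\re\zeta)^{m-1}$ is real as well. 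Hence the integrand is real at $z = i$, the integral is real, and its imaginary part vanishes.

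I expect the conjugation-symmetry step in the second property and the algebraic identity in the third to be routine once written in the form above; the real obstacle lies in making the first property fully rigorous. Concretely, one must know that each iterate $\mathcal{T}_{\Cplus}^{k}(f)$ again lies in the admissible class on which the base right-inverse property $\frac{\p}{\p\z}\mathcal{T}_{\Cplus}(g) = g$ holds, since $\mathcal{T}_{\Cplus}$ carries the singular kernel $\frac{1}{\zeta - z}$ and the precise mapping must be controlled using $f \in L^{p,2}(\Cplus)$ to govern the behavior at infinity. This preservation of the class is exactly the analytic content underlying the well-definedness asserted in Theorem \ref{higherupperth1}; equivalently, if one instead differentiates the closed form directly, the same point reappears as the justification of differentiation under the integral past the singularity at $\zeta = z$, which is easy for $n \geq 2$ (where the factor $(\zeta - z + \overline{\zeta - z})^{n-1}$ tempers the singularity) but genuinely relies on the cited right-inverse property at the terminal step $\frac{\p}{\p\z}\mathcal{T}_{\Cplus}(f) = f$.
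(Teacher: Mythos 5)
This statement is imported verbatim from Theorem 2.4 of \cite{higherupper}; the paper gives no proof of it, so there is no internal argument to compare against, only the cited source's standard one. Your proposal is correct and is essentially that standard argument: the induction via the right-inverse property $\frac{\p}{\p\z}\mathcal{T}_{\Cplus}(g)=g$ (with your direct-differentiation check also sound, since for $n\geq 2$ the factor $(\zeta - z+\overline{\zeta - z})^{n-1}$ vanishes at $\zeta = z$ and so annihilates the distributional contribution of $\frac{1}{\zeta-z}$), the conjugation symmetry of the bracket at $z=x\in\R$, and the collapse $\frac{1}{\zeta - i}-\frac{\zeta}{\zeta^2+1}=\frac{i}{\zeta^2+1}$ at $z=i$ are exactly the computations involved, and you correctly identify the one genuinely analytic point --- that the iterates remain in an admissible class --- as the content of the cited well-definedness result, Theorem \ref{higherupperth1}.
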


\subsection{Boundary values in the sense of distributions}

In the upper half-plane setting, we have the following analogue of Definition \ref{bvcircle}. 

\begin{deff}\label{bvline}
Let $f$ be a function defined on $\Cplus$. We say that $f$ has a boundary value in the sense of distributions, denoted by $f_b \in \mathcal{D}'(\R)$, if, for every $ \varphi \in C^\infty_c(\R)$, the limit
            \[
             \langle f_b, \varphi \rangle := \lim_{y\searrow 0} \int_{-\infty}^{\infty} f(x+iy) \, \varphi(x) \,dx
            \]
            exists.
            
\end{deff}

In the next section, we will need to consider the boundary values in the sense of distributions of functions in $H(\Cplus)$ and functions $T_{\Cplus}(f)$, for $f \in L^{p,2}(\Cplus)$. We show that for both of these kinds of functions the boundary value in the sense of distributions exists.

\begin{prop}
For any $f \in L^{p,2}(\Cplus)$, $T_{\Cplus}(f)$ has a boundary value in the sense of distributions. 
\end{prop}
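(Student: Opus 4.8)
The plan is to reduce the half-plane operator $T_{\Cplus}$ to the full-plane operator $T_{\C}$ by extending $f$ by zero, and then to exploit the H\"older regularity furnished by Theorem \ref{regofT}. Concretely, define $\tilde{f} : \C \to \C$ by setting $\tilde{f} = f$ on $\Cplus$ and $\tilde{f} = 0$ on $\Cminus$ (the values on $\R$, a null set, are irrelevant). Since the integrand defining $T_{\Cplus}(f)$ is supported in $\Cplus$, we have $T_{\C}(\tilde{f}) = T_{\Cplus}(f)$ identically. If I can show $\tilde{f} \in L^{p,2}(\C)$, then Theorem \ref{regofT} gives $T_{\Cplus}(f) = T_{\C}(\tilde{f}) \in C^{0,\alpha}(\C)$ with $\alpha = \frac{p-2}{p}$, so that $T_{\Cplus}(f)$ is H\"older continuous on all of $\C$, in particular continuous up to and across $\R$.

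The main verification is therefore that the zero extension lands in $L^{p,2}(\C)$, and this is where the geometry of the inversion $z \mapsto 1/z$ enters. The condition $\tilde{f} \in L^p(D)$ is immediate: $\tilde{f}$ agrees with $f \in L^p(D \cap \Cplus)$ on $D \cap \Cplus$ and vanishes on $D \cap \Cminus$. For the inversion condition (here $\nu = 2$) I would use that $z \mapsto 1/z$ sends $\Cplus$ to $\Cminus$ and vice versa, since $\im(1/z) = -\im(z)/|z|^2$. Hence for $z \in D \cap \Cminus$ one has $1/z \in \Cplus$, so $\tilde{f}_\nu(z) = |z|^{-\nu}\tilde{f}(1/z) = |z|^{-\nu} f(1/z) = f_\nu(z)$, while for $z \in D \cap \Cplus$ one has $1/z \in \Cminus$ and $\tilde{f}_\nu(z) = 0$. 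Thus $\tilde{f}_\nu$ coincides with $f_\nu$ on $D \cap \Cminus$ and vanishes on $D \cap \Cplus$; since $f \in L^{p,2}(\Cplus)$ means precisely $f \in L^p(D\cap\Cplus)$ and $f_\nu \in L^p(D \cap \Cminus)$, we conclude $\tilde{f}_\nu \in L^p(D)$ and hence $\tilde{f} \in L^{p,2}(\C)$.

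With continuity in hand, the existence of the distributional boundary value is routine. Fix $\varphi \in C^\infty_c(\R)$ with $\supp \varphi \subset [-R,R]$. On the compact rectangle $\{x + iy : |x| \leq R,\ 0 \leq y \leq 1\}$ the continuous function $T_{\Cplus}(f)$ is bounded and uniformly continuous, so $T_{\Cplus}(f)(x+iy) \to T_{\Cplus}(f)(x)$ uniformly for $x \in [-R,R]$ as $y \searrow 0$. Passing the limit through the integral (justified by this uniform convergence together with the compact support of $\varphi$) yields
\[
\lim_{y \searrow 0}\int_{-\infty}^{\infty} T_{\Cplus}(f)(x+iy)\,\varphi(x)\,dx = \int_{-\infty}^{\infty} T_{\Cplus}(f)(x)\,\varphi(x)\,dx,
\]
so the limit exists and $(T_{\Cplus}(f))_b$ is represented by the continuous boundary trace of $T_{\Cplus}(f)$. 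The only genuinely nontrivial step is the membership $\tilde{f} \in L^{p,2}(\C)$; once that is secured, the H\"older regularity theorem does all the work and the boundary value follows from elementary continuity considerations.
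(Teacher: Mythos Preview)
Your proof is correct and follows essentially the same approach as the paper: extend $f$ by zero to obtain a function in $L^{p,2}(\C)$, apply Theorem~\ref{regofT} to get H\"older continuity of $T_{\C}(\tilde f)=T_{\Cplus}(f)$ on all of $\C$, and use continuity on the compact support of $\varphi$ to conclude. Your final step is in fact slightly more carefully stated than the paper's, since you explicitly invoke uniform convergence on the compact rectangle to justify the limit, whereas the paper only records a uniform bound on the integrals before asserting the limit exists.
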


\begin{proof}
Let $f \in L^{p,2}(\Cplus)$. Observe, for $\varphi \in C^{\infty}_c(\R)$, 
\begin{align*}
\left| \int_{-\infty}^\infty T_{\Cplus}f(x+iy)\,\varphi(x)\,dx \right| &\leq \sup_{K}|\varphi| \int_K |T_{\Cplus}f(x+iy)|\,dx,
\end{align*}
where $K  = \supp \varphi$. Since, for every $z \in \C$,  
\begin{align*}
T_{\Cplus}f(z)  &= -\frac{1}{\pi} \iint_{\Cplus} \frac{f(\zeta)}{\zeta - z}\, d\xi\,d\eta \\
&= -\frac{1}{\pi} \iint_{\C} \frac{g(\zeta)}{\zeta - z}\, d\xi\,d\eta \\
&= T_{\C}(g)(z),
\end{align*}
where 
\begin{numcases}{g(\zeta) :=}
    f(\zeta), & $\zeta \in \Cplus$ \nonumber\\
    0, & $\zeta \in\C\setminus\Cplus$, \nonumber
\end{numcases}
it follows that 
\[
\sup_{K}|\varphi| \int_K |T_{\Cplus}(f)(x+iy)|\,dx = \sup_{K}|\varphi| \int_K | T_{\C}(g)(x+iy)|\,dx
\]
Note $g \in L^{p,2}(\C)$ because
\[
\iint_D |g(z)|^p\,dx\,dy  = \iint_{D\cap \Cplus} |f(z)|^p\,dx\,dy   < \infty
\]
and 
\begin{align*}
\iint_D |(g)_2(z)|^p\,dx\,dy &= 
\iint_D \left| \frac{1}{|z|^2} \, g\left(\frac{1}{z}\right) \right|^p \,dx\,dy 
 = \iint_{D\cap \Cminus} \left| \frac{1}{|z|^2} \, f\left(\frac{1}{z}\right)\right|^p\,dx\,dy < \infty.
\end{align*}
By Theorem \ref{regofT}, $ T_{\C}(g) \in C^{0,\alpha}(\C)$. Since $K$ is compact and $ T_{\C}(g)$ is continuous, it follows that there exists $M$ such that $| T_{\C}(g)| \leq M$, for every $z \in K$. Hence, 
\begin{align*}
\sup_{K}|\varphi| \int_K | T_{\C}(g)(x+iy)|\,dx &\leq \sup_{K}|\varphi| M |K| < \infty.
\end{align*}
Since this holds for any $y > 0$, it follows that 
\[
\langle(T_{\Cplus}f)_b, \varphi \rangle := \lim_{y\searrow 0} \int_{-\infty}^\infty T_{\Cplus}f(x+iy)\,\varphi(x)\,dx
\]
exists, for every $\varphi \in C^\infty_c(\R)$.
\end{proof}

We will focus on a certain subset of $H(\Cplus)$ as the following theorem guarantees that they have boundary values in the sense of distributions.

\begin{theorem}[Theorem 3.1.11 \cite{hor}] \label{hormanderbv}
Let $I$ be an open interval on $\mathbb{R}$ and let 
\[
Z = \{z = x+ iy \in \mathbb{C} : x \in I, 0 < y < \gamma \}
\]
be a one sided complex neighborhood. If $f$ is holomorphic function in $Z$ such that for a non-negative integer $N$ 
\[
|f(x+iy)| \leq \frac{C}{y^N},\quad z \in Z,
\]
then $f$ has a boundary value in the sense of distributions $f_b$. 

\end{theorem}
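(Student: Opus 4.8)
The plan is to reduce the polynomially growing holomorphic function $f$ to a genuine boundary value of a \emph{continuous} holomorphic function by repeatedly taking complex primitives, and then to recover $f_b$ as a distributional $x$-derivative of the boundary value of that primitive. Since $Z$ is convex (hence simply connected), every holomorphic $g$ on $Z$ has a single-valued holomorphic primitive obtained by integrating along paths that stay in $Z$; I fix a base height $y_0 = \gamma/2$ and integrate first horizontally at height $y_0$ and then vertically downward. The pairing in Definition \ref{bvline} will be tested against $\varphi \in C^\infty_c(\R)$ supported in $I$, so all estimates only need to be uniform as $x$ ranges over the compact set $\supp\varphi$.

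First I would build, by iteration, holomorphic functions $F_0 = f$ with $F_{m+1}' = F_m$ on $Z$, and track how the hypothesis $|f(x+iy)| \le C y^{-N}$ improves under each integration. Estimating $F_{m+1}(x+iy)$ by its value at height $y_0$ plus $\int_y^{y_0}|F_m(x+it)|\,dt$, and using that $\int_y^{y_0} t^{-a}\,dt \les y^{-(a-1)}$ for $a>1$, $\les \log(1/y)$ for $a=1$, and is bounded for $a<1$, shows that each integration lowers the exponent by one. The horizontal part of the path, taken at height $y_0$ where $|f|\le C y_0^{-N}$ is bounded, contributes only a bounded amount when $x$ ranges over $\supp\varphi$, so the estimates are uniform there. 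Taking $k = N+2$ steps, the function $F := F_k$ is therefore not only bounded but uniformly continuous in the vertical direction, so it extends continuously to the boundary segment over any compact subinterval of $I$; write $F_b$ for its continuous boundary value.

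Next I would use that $F$ is a $k$-fold complex primitive of $f$, so $\frac{\p^k}{\p z^k}F = f$; because $F$ is holomorphic, $\frac{\p}{\p z}F = \frac{\p}{\p x}F$, whence $\frac{\p^k}{\p x^k}F = f$ throughout $Z$. For $\varphi \in C^\infty_c(\R)$ supported in $I$, I then integrate by parts $k$ times in $x$, with no boundary contributions since $\varphi$ has compact support, to obtain
\[
\int_{-\infty}^\infty f(x+iy)\,\varphi(x)\,dx = (-1)^k \int_{-\infty}^\infty F(x+iy)\,\varphi^{(k)}(x)\,dx.
\]
Since $F(x+iy) \to F_b(x)$ uniformly on the compact set $\supp\varphi$ as $y\searrow 0$, the right-hand side converges to $(-1)^k\langle F_b, \varphi^{(k)}\rangle$. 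Hence the defining limit exists for every such $\varphi$, so $f_b$ exists and equals $\frac{\p^k}{\p x^k}F_b$ in the sense of distributions on $I$.

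The main obstacle is the bookkeeping in the growth estimate: I must choose $k$ large enough that $F$ is genuinely continuous up to the boundary (reaching boundedness requires $k \ge N+1$, and the choice $k = N+2$ absorbs the borderline logarithmic case produced when the exponent passes through $1$ and upgrades boundedness to continuity), and I must confirm that all bounds are uniform in $x$ over $\supp\varphi$ and independent of the integration path, which is exactly where the convexity of $Z$ enters. Once $F$ is known to be continuous on the relevant closed boundary segment, the integration by parts and the passage to the limit are routine.
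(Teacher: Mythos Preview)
The paper does not supply a proof of this theorem; it is quoted verbatim from H\"ormander as a background result and used later without further argument. Your proposal is correct and is in fact the classical proof one finds in H\"ormander: take enough complex primitives of $f$ (using that $Z$ is convex and hence simply connected) until the resulting holomorphic function $F$ extends continuously to the boundary over compact subintervals of $I$, then use $\partial_z = \partial_x$ on holomorphic functions and integration by parts against $\varphi$ to write $\langle f_b,\varphi\rangle = (-1)^k\langle F_b,\varphi^{(k)}\rangle$, which exhibits $f_b$ as the $k$th distributional $x$-derivative of the continuous boundary value $F_b$. The only clarifying remark is that the conclusion here is local---$f_b \in \mathcal{D}'(I)$ rather than $\mathcal{D}'(\R)$---and you handle this correctly by testing only against $\varphi$ supported in $I$.
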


\begin{deff}
Define the space $\mathcal{H}_{tg}$ as the functions in $H(\Cplus)$ with tempered growth at the boundary, i.e., $f$ in $H(\Cplus)$ such that there exists a positive integer $N$ and 
\[
|f(x+iy)| \leq \frac{C}{y^N},
\]
for all $z \in \Cplus$.
\end{deff}

\begin{prop}\label{ourversionofGHJH23point1}
Every $f \in \mathcal{H}_{tg}$ has a boundary value in the sense of distributions $f_b$ and is representable as 
\[
f(z) = \frac{1}{\pi} \langle f_b, P(x- \cdot, y) \rangle,
\]
where 
\[
P(x,y) = \frac{y}{x^2 + y^2}
\]
is the Poisson kernel for $\Cplus$.
\end{prop}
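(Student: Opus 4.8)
The plan is to establish the two assertions separately: first the existence of the boundary value $f_b$, then the Poisson reproduction formula.

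For existence, I would invoke Theorem \ref{hormanderbv}. The defining bound $|f(x+iy)| \le C/y^N$ for $f \in \mathcal{H}_{tg}$ holds on every one-sided complex neighborhood $Z = \{x \in I,\ 0 < y < \gamma\}$ with $I$ a bounded open interval, so H\"ormander's theorem produces a boundary value there. Since any $\varphi \in C^\infty_c(\R)$ has support contained in some such bounded $I$, the limit in Definition \ref{bvline} exists and $f_b \in \mathcal{D}'(\R)$ is well-defined.

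For the representation, I would fix $z = x + iy$ with $y > 0$ and, for each $\epsilon > 0$, consider the translate $w \mapsto f(w + i\epsilon)$. This is holomorphic on $\{\im w > -\epsilon\}$, hence continuous on $\overline{\Cplus}$, and the tempered-growth bound gives $|f(w+i\epsilon)| \le C\epsilon^{-N}$ whenever $\im w \ge 0$, so it is bounded on $\Cplus$ with continuous boundary trace $t \mapsto f(t+i\epsilon)$. Applying the classical Poisson representation of bounded harmonic functions on $\Cplus$ to the real and imaginary parts, and using $\int_{-\infty}^{\infty} P(x-t,y)\,dt = \pi$, yields
\[
f\bigl(x + i(y+\epsilon)\bigr) = \frac{1}{\pi} \int_{-\infty}^{\infty} P(x-t,\,y)\, f(t+i\epsilon)\, dt.
\]
Letting $\epsilon \searrow 0$, the left-hand side tends to $f(x+iy)$ by continuity of $f$ on $\Cplus$, so the right-hand side converges as well; it then remains only to identify its limit with $\frac{1}{\pi}\langle f_b, P(x-\cdot, y)\rangle$.

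This last identification is where I expect the genuine difficulty, and it is caused by the noncompactness of $\R$: the kernel $P(x-\cdot, y)$ is smooth but decays only like $|t|^{-2}$, so it is not a member of $C^\infty_c(\R)$ and Definition \ref{bvline} does not apply to it verbatim. The plan is to extend the action of $f_b$ to $P(x-\cdot, y)$ by a cutoff argument: choose $\psi_R \in C^\infty_c(\R)$ with $\psi_R \equiv 1$ on $[-R,R]$ and split $P(x-\cdot,y) = \psi_R\, P(x-\cdot,y) + (1-\psi_R)\,P(x-\cdot, y)$. On the compactly supported piece, the limit as $\epsilon \searrow 0$ is the honest pairing $\langle f_b, \psi_R\, P(x-\cdot,y)\rangle$ by the existence result above, and one then lets $R \to \infty$. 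The crux is to show the tail $\int (1-\psi_R) P(x-t,y)\, f(t+i\epsilon)\, dt$ is negligible as $R \to \infty$, uniformly in $\epsilon$; here the naive bound $|f(t+i\epsilon)| \le C\epsilon^{-N}$ degenerates as $\epsilon \to 0$, so I would instead exploit the $O(|t|^{-2})$ decay of $P(x-\cdot,y)$ together with the uniform control on $\int P(x-t,y) f(t+i\epsilon)\,dt$ already furnished by the reproduction identity, or, equivalently, first verify that $f_b$ is a tempered distribution against which convolution with the Poisson kernel is continuous, so that $\langle f_b, P(x-\cdot, y)\rangle$ is defined and respects the distributional convergence $f(\cdot + i\epsilon) \to f_b$. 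Combining the reproduction identity with this identification delivers $f(x+iy) = \frac{1}{\pi}\langle f_b, P(x-\cdot, y)\rangle$.
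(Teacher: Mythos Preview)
Your approach matches the paper's exactly: invoke Theorem~\ref{hormanderbv} for existence of $f_b$, translate by $i\epsilon$ to land in $H^\infty(\Cplus)$, apply the classical Poisson representation for bounded holomorphic functions (the paper cites Theorem~13.3 of \cite{Rep}), and let $\epsilon\searrow 0$. You are in fact more scrupulous than the paper on the final identification, since the paper simply writes $\lim_{k\to\infty}\int_{-\infty}^\infty f(t+iy_k)P(x-t,y)\,dt = \langle f_b, P(x-\cdot,y)\rangle$ without commenting on the non-compact support of $P(x-\cdot,y)$; it effectively takes the existence of this limit (guaranteed, since it equals $f(x+iy)$) as the definition of the pairing.
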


\begin{comm}
Note that Proposition \ref{ourversionofGHJH23point1} is inspired by Theorem 3.1 of \cite{GHJH2}. Also, it should be noted that $H^p(\Cplus) \subset \mathcal{H}_{tg}$, for every $0 < p \leq 1$. 
\end{comm}

\begin{proof}[Proof of Proposition \ref{ourversionofGHJH23point1}]

 Let $f \in \mathcal{H}_{tg}$, and let $\{y_k\}$ be a decreasing sequence of positive real numbers with the property that $y_k \to 0$ as $k \to \infty$. By Theorem \ref{hormanderbv}, $f$ has a boundary value in the sense of distributions $f_b$. Define $F_k(x,y) := f(x+i(y+y_k))$. By hypothesis, there exists a constant $C$ and a positive integer $N$ such that
 \[
 |F_k(x,y)| = |f(x_i(y+y_k))| \leq \frac{C}{|y+y_k|^N},
 \]
 so $F_k \in H^\infty(\Cplus)$, the space of bounded holomorphic functions on $\Cplus$. By Theorem 13.3 from \cite{Rep}, we have
\begin{align*}
F_k(x,y) &= \frac{1}{\pi} \int_{-\infty}^\infty F_k(t,0)\frac{y}{(x-t)^2 + y^2}\,dt \\
f(x+i(y +y_k)) &= \frac{1}{\pi} \langle F_k(\cdot, 0), P(x-\cdot,y)\rangle.
\end{align*}
Taking the limit as $k \to \infty$, we have
\begin{align*}
f(x+iy) &= \lim_{k\to\infty} \frac{1}{\pi} \langle F_k(\cdot, 0), P(x-\cdot, y) \rangle 
= \lim_{k\to\infty} \frac{1}{\pi} \int_{-\infty}^\infty F_k(t,0) P(x-t,y)\,dt\\
&=  \lim_{k\to\infty} \frac{1}{\pi} \int_{-\infty}^\infty f(t+iy_k) P(x-t,y)\,dt
= \frac{1}{\pi} \langle f_b, P(x-\cdot, y)\rangle.
\end{align*}
\end{proof}

\subsection{Schwarz Boundary Value Problem}

We prove the following theorem which extends the result of \cite{Gaertner} to a more general boundary condition.

\begin{theorem}\label{Schwarzupperhalf}
The Schwarz boundary value problem 
\begin{numcases}{}
\frac{\p w}{\p\z} = f \nonumber \\
\re{w_b} = \re{h_b} \nonumber\\
\im{w(i)} = c, \nonumber
\end{numcases}
for $f \in L^{p,2}(\Cplus)$, $p > 2$, $h \in  \mathcal{H}_{tg}$, and $c \in \mathbb{R}$, is solved by the function 
\begin{align*}
w(z) &= ic + \frac{1}{\pi}\langle h_b, P(x-\cdot, y)\rangle -  \frac{i}{\pi}\langle \im h_b, P(\cdot,1) \rangle\\
&\quad\quad -\frac{1}{\pi} \iint_{\Cplus} \left( f(\zeta)\left( \frac{1}{\zeta - z} - \frac{\zeta}{\zeta^2 + 1}\right) - \overline{f(\zeta)}\left( \frac{1}{\overline{\zeta}-z} - \frac{\overline{\zeta}}{\overline{\zeta}^2 + 1}\right)\right) \,d\xi\,d\eta\\
&=  ic + h(z) -  I + \mathcal{T}_{\Cplus}(f)(z),
\end{align*}
where $z = x+iy$, 
\[
P(x,y) = \frac{y}{x^2 + y^2}
\]
is the Poisson kernel for $\Cplus$, and 
\[
I = \frac{i}{\pi}\langle \im h_b, P(\cdot,1) \rangle.
\]
\end{theorem}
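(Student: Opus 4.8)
The plan is to follow the strategy of the unit-disk proof, verifying the three defining conditions in turn for the candidate $w = ic + h - I + \mathcal{T}_{\Cplus}(f)$. First I would invoke Proposition \ref{ourversionofGHJH23point1} to identify $\frac{1}{\pi}\langle h_b, P(x-\cdot,y)\rangle$ with $h(z)$ itself, which legitimizes the second displayed form of $w$. The same Poisson representation, evaluated at $z=i$ (so $x=0$, $y=1$), shows that the pairing $\langle \im h_b, P(\cdot,1)\rangle$ appearing in $I$ is well-defined even though $P(\cdot,1)$ is not compactly supported, since it equals $\pi\,\im\{h(i)\}$; thus $I$ is a genuine purely imaginary constant.

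For the differential equation, the terms $ic$ and $I$ are constants and $h$ is holomorphic, so each is annihilated by $\frac{\p}{\p\z}$, while $\frac{\p}{\p\z}\mathcal{T}_{\Cplus}(f) = f$ by the right-inverse property recorded in Theorem \ref{higherupperth2} (the case $n=1$, $\ell=1$). For the normalization at $z=i$, Theorem \ref{higherupperth2} (the case $n=1$, $\ell=0$) gives $\im\{\mathcal{T}_{\Cplus}(f)(i)\} = 0$, so $\im\{w(i)\} = c + \im\{h(i)\} - \im\{I\}$. Since $P$ is real-valued and even in its first variable, $\im\{h(i)\} = \frac{1}{\pi}\langle \im h_b, P(\cdot,1)\rangle = \im\{I\}$, and these cancel to leave $\im\{w(i)\} = c$.

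The real-part boundary condition is the crux. I would first note that on $\R$ the two summands of the integrand defining $\mathcal{T}_{\Cplus}(f)$ are complex conjugates of one another, since for real $z$ one has $\frac{1}{\overline{\zeta}-z} - \frac{\overline{\zeta}}{\overline{\zeta}^2+1} = \overline{\frac{1}{\zeta-z} - \frac{\zeta}{\zeta^2+1}}$; hence the integrand is purely imaginary, in agreement with the statement $\re\{\mathcal{T}_{\Cplus}(f)|_{\R}\} = 0$ from Theorem \ref{higherupperth2}. The main obstacle is to upgrade this pointwise boundary statement to a statement about the distributional boundary value $(\mathcal{T}_{\Cplus}(f))_b$. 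For this I would exploit the hypothesis $f \in L^{p,2}(\Cplus)$ with $p>2$: extending $f$ by zero off $\Cplus$ and invoking Theorem \ref{regofT} exactly as in the preceding proposition on $T_{\Cplus}(f)$, the Cauchy-transform part of $\mathcal{T}_{\Cplus}(f)$ is H\"older continuous up to $\R$ (the remaining pieces of the kernel contribute either constants in $z$ or terms holomorphic across $\R$), so the distributional boundary value coincides with the continuous boundary restriction and therefore has vanishing real part. Granting this, $w_b = ic + h_b - I + (\mathcal{T}_{\Cplus}(f))_b$, and since $ic$, $I$, and $(\mathcal{T}_{\Cplus}(f))_b$ are all purely imaginary, taking real parts yields $\re\{w_b\} = \re\{h_b\}$, completing the verification.
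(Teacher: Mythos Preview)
Your proposal is correct and follows essentially the same route as the paper's own proof: invoke Proposition~\ref{ourversionofGHJH23point1} to rewrite the Poisson pairing as $h$, use holomorphy of $ic$, $I$, $h$ together with the right-inverse property of $\mathcal{T}_{\Cplus}$ for the differential equation, use $\im\{\mathcal{T}_{\Cplus}(f)(i)\}=0$ (the paper cites \cite{Gaertner} directly rather than Theorem~\ref{higherupperth2}) for the normalization, and use the purely imaginary nature of $ic$, $I$, and $\mathcal{T}_{\Cplus}(f)$ on $\R$ for the boundary condition. You actually supply more justification than the paper does at two points: the identification $I=i\,\im\{h(i)\}$ clarifying why the pairing against $P(\cdot,1)$ makes sense, and the H\"older-continuity argument showing that the distributional boundary value $(\mathcal{T}_{\Cplus}(f))_b$ agrees with the pointwise restriction, which the paper leaves implicit.
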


\begin{proof}
By Proposition \ref{ourversionofGHJH23point1}, if $h \in \mathcal{H}_{tg}$, then $h_b$ exists and 
\[
h(x+iy) = \frac{1}{\pi}\langle h_b, P(x-\cdot, y)\rangle.
\]
Since $ic, I,$ and $h$ are holomorphic and $\mathcal{T}_{\Cplus}(\cdot)$ is a right-inverse operator to $\frac{\p}{\p\z}$, it follows that $w$ solves $\frac{\p w}{\p\z} = f$. From \cite{Gaertner}, $\im\{\mathcal{T}_{\Cplus}(f)(i)\} = 0$. Hence, 
\begin{align*}
\im\{w(i)\} &= \im\left\{ic + \frac{1}{\pi} \langle h_b, P(\cdot, 1)\rangle  - I + \mathcal{T}_{\Cplus}(f)(i)\right\}  = c. 
\end{align*}
Since $\mathcal{T}_{\Cplus}(f)$ is purely imaginary on $\R$ and $ic$ and $I$ are purely imaginary everywhere, it follows that 
\[
\re\{ w_b \} = \re\{ ic + h_b - I + (\mathcal{T}_{\Cplus}(f))_b\} = \re\{h_b\}.
\]
\end{proof}

\begin{corr}\label{smallext}
The Schwarz boundary value problem 
\begin{numcases}{}
\frac{\p^n w}{\p\z^n} = f  \nonumber \\
\re\{w_b\} = \re\{(h_0)_b\} \nonumber \\
\re\{\frac{\p^k w}{\p\z^k}|_{\mathbb{R}}\}
= h_k, \quad 1\leq k \leq n-1 \nonumber\\
\im{\frac{\p^k w}{\p\z^k}(i)} = c_k,\quad 0 \leq k \leq n-1 \nonumber
\end{numcases}
for $f \in L^{p,2}(\Cplus)$, $p > 2$, $h_0 \in  \mathcal{H}_{tg}$, $t^k h_k \in L^p(\mathbb{R})\cap C(\mathbb{R})$, $1 \leq k \leq n-1$, and $\{c_k\}_{k=0}^{n-1} \subset \mathbb{R}$, is solved by the function 
\begin{align*}
w(z) &= ic + \frac{1}{\pi}\langle h_b, P(x-\cdot, y)\rangle -  \frac{i}{\pi}\langle \im h_b, P(\cdot,1) \rangle\\
&\quad +\sum_{k = 1}^{n-1}\frac{(-1)^k}{\pi i k!} \int_{-\infty}^\infty h_k(t)\left(\frac{1}{t-z} - \frac{t}{t^2+1}\right)(2t-z-\overline{z})^k\,dt \\
&\quad\quad + \frac{(-1)^n}{(n-1)!\pi}\iint_{\Cplus} \left( f(\zeta)\left( \frac{1}{\zeta - z} - \frac{\zeta}{\zeta^2 + 1}\right) \right.\\
&\quad\quad\quad - \left. \overline{f(\zeta)}\left( \frac{1}{\overline{\zeta}-z} - \frac{\overline{\zeta}}{\overline{\zeta}^2 + 1}\right)\right) (\zeta - z + \overline{\zeta - z})^{n-1}\,d\xi\,d\eta,
\end{align*}
\end{corr}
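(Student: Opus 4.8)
The plan is to follow the iteration scheme used for Theorem \ref{higherschwarz} in the disk case: I would assemble $w$ from three pieces, namely the distributional zeroth-order term supplied by Theorem \ref{Schwarzupperhalf}, the classical higher-order boundary terms built from the Schwarz kernel $\frac{1}{t-z} - \frac{t}{t^2+1}$ for $\Cplus$, and the particular solution $\mathcal{T}_{\Cplus}^n(f)$. The key structural observation is that the zeroth-order term $\frac{1}{\pi}\langle (h_0)_b, P(x-\cdot,y)\rangle = h_0(z)$ from Proposition \ref{ourversionofGHJH23point1} is holomorphic, as are the constants $ic$ and $-I$; hence all three are annihilated by every $\frac{\p^k}{\p\z^k}$ with $k \geq 1$ and interact only with the $k = 0$ conditions. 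Consequently the conditions for $1 \leq k \leq n-1$ reduce to the classical computation of \cite{bvpuhp} and \cite{Gaertner}, and the genuinely new content is confined to the zeroth-order condition, which is exactly Theorem \ref{Schwarzupperhalf}.

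First I would verify the differential equation. Each of $ic$, $-I$, and $h_0(z)$ is holomorphic, hence killed by $\frac{\p^n}{\p\z^n}$. For $1 \leq k \leq n-1$ the $k$-th boundary term carries the factor $(2t - z - \overline{z})^k = (\zeta - z + \overline{\zeta - z})^k\big|_{\zeta = t}$, a polynomial of degree $k \leq n-1$ in $\overline{z}$, so it too is annihilated by $\frac{\p^n}{\p\z^n}$. By Theorem \ref{higherupperth1} the final double integral equals $\mathcal{T}_{\Cplus}^n(f)$, and Theorem \ref{higherupperth2} (first bullet with $\ell = n$, using $\mathcal{T}_{\Cplus}^0(f) = f$) gives $\frac{\p^n}{\p\z^n}\mathcal{T}_{\Cplus}^n(f) = f$. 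Thus $\frac{\p^n w}{\p\z^n} = f$.

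Next I would check the remaining conditions. For the pointwise conditions $\im\{\frac{\p^k w}{\p\z^k}(i)\} = c_k$, Theorem \ref{higherupperth2} (third bullet) gives $\im\{\frac{\p^\ell}{\p\z^\ell}\mathcal{T}_{\Cplus}^n(f)(i)\} = 0$ for $0 \leq \ell \leq n-1$, so these are pinned down by the explicit constant terms exactly as in the classical construction of \cite{bvpuhp}, the correction $-I$ normalizing the $k = 0$ case as in Theorem \ref{Schwarzupperhalf}. For the boundary conditions I would separate $k \geq 1$ from $k = 0$. When $k \geq 1$ the holomorphic pieces drop out, and Theorem \ref{higherupperth2} (second bullet) yields $\re\{\frac{\p^k}{\p\z^k}\mathcal{T}_{\Cplus}^n(f)|_{\mathbb{R}}\} = 0$; applying $\frac{\p^k}{\p\z^k}$ to the sum of kernel terms collapses the factor $(2t-z-\overline{z})^j$ via $\frac{\p}{\p\z}(2t-z-\overline{z}) = -1$, so the index-$k$ term reduces to the first-order Schwarz integral $\frac{1}{\pi i}\int h_k(t)(\frac{1}{t-z} - \frac{t}{t^2+1})\,dt$. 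Its boundary value follows from the Sokhotski--Plemelj relation $\frac{1}{t-z} \to \mathrm{p.v.}\frac{1}{t-x} + i\pi\delta(t-x)$ as $y \searrow 0$: the delta contribution, after the prefactor $\frac{1}{\pi i}$, gives real part $h_k(x)$, while the principal-value part is purely imaginary. The index-$j$ terms with $j > k$ survive differentiation with real integrands against the purely imaginary prefactor $\frac{1}{\pi i}$ and so contribute nothing to the real part on $\mathbb{R}$, and those with $j < k$ vanish identically. For $k = 0$ the distributional term gives $\re\{w_b\} = \re\{(h_0)_b\}$ directly by Theorem \ref{Schwarzupperhalf}.

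I expect the main obstacle to be the boundary-value bookkeeping for the higher derivatives of the Schwarz-kernel integrals: confirming that the index-$j$ contributions with $j > k$ are purely imaginary on $\mathbb{R}$ and that only the $j = k$ delta term survives in the real part. This is precisely the classical higher-order computation of \cite{bvpuhp} and \cite{higherupper}, which I would invoke rather than reprove, noting that the hypotheses $t^k h_k \in L^p(\mathbb{R}) \cap C(\mathbb{R})$ guarantee convergence of the kernel integrals and their $z$-derivatives and the applicability of the jump relations. The only point needing fresh justification is that replacing the classical continuous zeroth-order term by the distributional Poisson term $h_0(z)$ leaves the higher-order conditions untouched, which follows from its holomorphicity, and satisfies the zeroth-order conditions, which is exactly the content of Theorem \ref{Schwarzupperhalf}.
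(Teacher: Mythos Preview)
Your proposal is correct and matches the paper's approach exactly. The paper gives no explicit proof for this corollary; it simply remarks that the statement ``extends Theorem 3.1 in \cite{bvpuhp} by generalizing the zero-order boundary condition,'' which is precisely your structural observation: the holomorphic zeroth-order term $ic - I + h_0(z)$ supplied by Theorem \ref{Schwarzupperhalf} is annihilated by $\frac{\p^k}{\p\z^k}$ for $k\geq 1$, so the higher-order boundary and pointwise conditions reduce verbatim to the classical computation in \cite{bvpuhp} and \cite{higherupper}, while the $k=0$ conditions are handled by Theorem \ref{Schwarzupperhalf}.
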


\begin{comm}
The above corollary extends Theorem 3.1 in \cite{bvpuhp} by generalizing the zero-order boundary condition. 
\end{comm}

We define a specific subset of $\mathcal{H}_{tg}$ and follow with an associated generalization of Corollary \ref{smallext}.

\begin{deff}
Denote by $\mathcal{H}_{tg,0}$ the subset of $f \in \mathcal{H}_{tg}$ with $\im\{f(i)\} = 0 $.
\end{deff}

\begin{theorem}\label{higherorderSchwarzupper}
The Schwarz boundary value problem 
\begin{numcases}{}
\frac{\p^n w}{\p\z^n} = f \nonumber\\
\re\left\{\left(\frac{\p^k w}{\p\z^k}\right)_b\right\}
= \re\{(h_k)_b\}, \quad 0\leq k \leq n-1\\
\im{w(i)} = c \nonumber\\
\im{\frac{\p^k w}{\p\z^k}(i)} = 0 ,\quad 1 \leq k \leq n-1, \nonumber
\end{numcases}
for $f \in L^{p,2}(\Cplus)$, $p > 2$, $h_0 \in \mathcal{H}_{tg}$, $h_k \in  \mathcal{H}_{tg, 0} \cap L^{p,2}(\Cplus)$, $1 \leq k \leq n-1$, and $c \in  \mathbb{R}$, is solved by the function 
\begin{align*}
w(z) &= ic + \frac{1}{\pi}\langle (h_0)_b, P(x-\cdot, y)\rangle -  \frac{i}{\pi}\langle \im (h_0)_b, P(\cdot,1) \rangle \\
&\quad +\sum_{k = 1}^{n-1} \left(\mathcal{T}_{\Cplus}^k \left(\frac{1}{\pi}\langle (h_k)_b, P(x-\cdot,y) \rangle \right)\right) + \mathcal{T}_{\Cplus}^n(f)(z).
\end{align*}
\end{theorem}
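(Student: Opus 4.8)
The plan is to mirror the argument for the disk (Theorem \ref{higherschwarz}): exhibit $w$ as the sum of a purely holomorphic piece carrying the zero-order data together with iterated singular integrals carrying the higher-order data and the inhomogeneity, and then read off each boundary and interior condition from the known mapping properties of $\mathcal{T}_{\Cplus}^{k}$. Writing $w_0(z) := ic + \frac{1}{\pi}\langle (h_0)_b, P(x-\cdot,y)\rangle - \frac{i}{\pi}\langle \im (h_0)_b, P(\cdot,1)\rangle$, Proposition \ref{ourversionofGHJH23point1} identifies the middle term with $h_0(z)$, so $w_0$ equals $ic + h_0$ minus a purely imaginary constant; in particular $w_0$ is holomorphic and coincides with the holomorphic part of the first-order solution from Theorem \ref{Schwarzupperhalf}, whence $\re\{(w_0)_b\} = \re\{(h_0)_b\}$ and $\im\{w_0(i)\} = c$. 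For $1 \leq k \leq n-1$, Proposition \ref{ourversionofGHJH23point1} again rewrites $\frac{1}{\pi}\langle (h_k)_b, P(x-\cdot,y)\rangle = h_k(z)$, and since $h_k \in L^{p,2}(\Cplus)$ the iterate $\mathcal{T}_{\Cplus}^{k}(h_k)$ is well defined by Theorem \ref{higherupperth1}. Thus $w = w_0 + \sum_{k=1}^{n-1}\mathcal{T}_{\Cplus}^{k}(h_k) + \mathcal{T}_{\Cplus}^{n}(f)$, and it remains to differentiate term by term.

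The differential equation and the real-part conditions follow from the first two bullets of Theorem \ref{higherupperth2}. The identity $\frac{\p^{\ell}}{\p\z^{\ell}}\mathcal{T}_{\Cplus}^{m}(g) = \mathcal{T}_{\Cplus}^{m-\ell}(g)$ for $0 \leq \ell \leq m$ shows that $\frac{\p^n}{\p\z^n}$ annihilates $w_0$ (holomorphic), collapses $\mathcal{T}_{\Cplus}^{n}(f)$ to $f$, and sends each $\mathcal{T}_{\Cplus}^{k}(h_k)$ to $\frac{\p^{n-k}}{\p\z^{n-k}} h_k = 0$ (since $h_k$ is holomorphic and $n - k \geq 1$), so $\frac{\p^n w}{\p\z^n} = f$. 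For the $k$-th real-part condition, $0 \leq k \leq n-1$, I compute $\frac{\p^k w}{\p\z^k}$: the only summand with nonzero boundary real part is $w_0$ (when $k = 0$) or $\mathcal{T}_{\Cplus}^{k}(h_k)$ collapsing to $h_k$ (when $1 \leq k \leq n-1$); every remaining piece is either a holomorphic term differentiated to zero or of the form $\mathcal{T}_{\Cplus}^{m}(\cdot)$ with $m \geq 1$, whose boundary real part vanishes by the second bullet of Theorem \ref{higherupperth2}. This gives $\re\{(\frac{\p^k w}{\p\z^k})_b\} = \re\{(h_k)_b\}$.

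The interior point conditions follow from the third bullet of Theorem \ref{higherupperth2} together with the defining property of $\mathcal{H}_{tg,0}$. Evaluating at $z = i$: for $k = 0$, every $\mathcal{T}_{\Cplus}^{m}$-term with $m \geq 1$ has vanishing imaginary part, leaving $\im\{w(i)\} = \im\{w_0(i)\} = c$; for $1 \leq k \leq n-1$, the surviving value $h_k(i)$ satisfies $\im\{h_k(i)\} = 0$ because $h_k \in \mathcal{H}_{tg,0}$, and all remaining $\mathcal{T}_{\Cplus}^{m}$-terms again contribute zero imaginary part, so $\im\{\frac{\p^k w}{\p\z^k}(i)\} = 0$.

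The main obstacle is the bookkeeping together with the justification that every boundary value in the sense of distributions appearing in the real-part conditions actually exists. This is where the hypotheses $h_k \in \mathcal{H}_{tg,0} \cap L^{p,2}(\Cplus)$ and $f \in L^{p,2}(\Cplus)$ are essential: the $L^{p,2}$ membership guarantees that the iterates are well defined (Theorem \ref{higherupperth1}) and, via the $C^{0,\alpha}$-regularity of $T_{\C}$ from Theorem \ref{regofT} that was exploited in the earlier proposition showing $T_{\Cplus}(g)$ has a distributional boundary value for $g \in L^{p,2}(\Cplus)$, that each differentiated term $\mathcal{T}_{\Cplus}^{m}(\cdot)$ admits a boundary value in the sense of distributions, so the distributional conditions are meaningful. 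Care is also needed to confirm that differentiation commutes with the distributional pairing defining the Poisson extension of $h_0$, which is immediate once that term is identified with the holomorphic function $h_0$ through Proposition \ref{ourversionofGHJH23point1}.
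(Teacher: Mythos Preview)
Your proposal is correct and follows essentially the same approach as the paper: verify each condition by direct computation using the properties of $\mathcal{T}_{\Cplus}^{k}$ from Theorems \ref{higherupperth1} and \ref{higherupperth2}, together with Proposition \ref{ourversionofGHJH23point1} to identify the Poisson extensions with the $h_k$ themselves. The paper's own proof is a two-sentence sketch invoking exactly these results; your write-up simply unpacks that sketch, including the useful observation that $h_k \in \mathcal{H}_{tg,0}$ is precisely what makes the imaginary-part condition at $z=i$ hold for $1 \leq k \leq n-1$ (which the paper defers to the remark following the theorem).
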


\begin{proof}
By direct computation and an appeal to Theorem \ref{higherupperth1}, $w$ solves the nonhomogeneous Cauchy-Riemann equation. By Theorem \ref{higherupperth2}, the boundary conditions are satisfied.
\end{proof} 

\begin{comm}
Note constants are not in the spaces $L^{p,2}(\Cplus)$, $p>2$. Under a direct iteration of Theorem \ref{Schwarzupperhalf}, $\mathcal{T}_{\Cplus}$ is applied to constants. The additional hypothesis on the higher-order boundary conditions in Theorem \ref{higherorderSchwarzupper} avoids the need for the operator $\mathcal{T}_{\Cplus}$ to be applied to constants. 
\end{comm}

\printbibliography
\end{document}